%this final-revised version submitted to
%the Advances in Geometry on 2024-01-24
%this version also submitted to the arXiv on 2024-01-15
%corrections done in the meantime for proofreading:
%the line before (2.4) & first line of (2.4)
\documentclass{conm-p-l}
\usepackage{amssymb}

%{\ \ (\nh\nnh{\it checked})}
\usepackage{amsfonts}
\usepackage{eucal}
\usepackage{upgreek}
%
 %changing the interline spacing
%

\begin{document}

\font\eightrm=cmr8
\font\eightit=cmti8
\font\eighttt=cmtt8
\font\tensans=cmss10
\def\emp{\text{\tensans\O}}
\def\tci
{\hbox{\hskip1.8pt$\rightarrow$\hskip-11.5pt$^{^{C^\infty}}$\hskip-1.3pt}}
\def\nft
{\hbox{$n$\hskip3pt$\equiv$\hskip4pt$5$\hskip4.4pt$($mod\hskip2pt$3)$}}
\def\bbR{\mathrm{I\!R}}
\def\rto{\bbR\hskip-.5pt^2}
\def\rtr{\bbR\hskip-.7pt^3}
\def\rfo{\bbR\hskip-.7pt^4}
\def\rn{\bbR^{\hskip-.6ptn}}
\def\mr{\bbR^{\hskip-.6ptm}}
\def\bbZ{\mathsf{Z\hskip-4ptZ}}
\def\bbRP{\text{\bf R}\text{\rm P}}
\def\bbC{{\mathchoice {\setbox0=\hbox{$\displaystyle\rm C$}\hbox{\hbox
to0pt{\kern0.4\wd0\vrule height0.9\ht0\hss}\box0}}
{\setbox0=\hbox{$\textstyle\rm C$}\hbox{\hbox
to0pt{\kern0.4\wd0\vrule height0.9\ht0\hss}\box0}}
{\setbox0=\hbox{$\scriptstyle\rm C$}\hbox{\hbox
to0pt{\kern0.4\wd0\vrule height0.9\ht0\hss}\box0}}
{\setbox0=\hbox{$\scriptscriptstyle\rm C$}\hbox{\hbox
to0pt{\kern0.4\wd0\vrule height0.9\ht0\hss}\box0}}}}
\def\bbQ{{\mathchoice {\setbox0=\hbox{$\displaystyle\rm Q$}\hbox{\raise
0.15\ht0\hbox to0pt{\kern0.4\wd0\vrule height0.8\ht0\hss}\box0}}
{\setbox0=\hbox{$\textstyle\rm Q$}\hbox{\raise
0.15\ht0\hbox to0pt{\kern0.4\wd0\vrule height0.8\ht0\hss}\box0}}
{\setbox0=\hbox{$\scriptstyle\rm Q$}\hbox{\raise
0.15\ht0\hbox to0pt{\kern0.4\wd0\vrule height0.7\ht0\hss}\box0}}
{\setbox0=\hbox{$\scriptscriptstyle\rm Q$}\hbox{\raise
0.15\ht0\hbox to0pt{\kern0.4\wd0\vrule height0.7\ht0\hss}\box0}}}}
\def\bbQ{{\mathchoice {\setbox0=\hbox{$\displaystyle\rm Q$}\hbox{\raise
0.15\ht0\hbox to0pt{\kern0.4\wd0\vrule height0.8\ht0\hss}\box0}}
{\setbox0=\hbox{$\textstyle\rm Q$}\hbox{\raise
0.15\ht0\hbox to0pt{\kern0.4\wd0\vrule height0.8\ht0\hss}\box0}}
{\setbox0=\hbox{$\scriptstyle\rm Q$}\hbox{\raise
0.15\ht0\hbox to0pt{\kern0.4\wd0\vrule height0.7\ht0\hss}\box0}}
{\setbox0=\hbox{$\scriptscriptstyle\rm Q$}\hbox{\raise
0.15\ht0\hbox to0pt{\kern0.4\wd0\vrule height0.7\ht0\hss}\box0}}}}
\def\aff{\mathrm{A\hn f\hh f}\hs}
\def\cx{C\hskip-2pt_x\w}
\def\cy{C\hskip-2pt_y\w}
\def\cz{C\hskip-2pt_z\w}
\def\hyp{\hskip.5pt\vbox
{\hbox{\vrule width3ptheight0.5ptdepth0pt}\vskip2.2pt}\hskip.5pt}
\def\er{r}
\def\es{s}
\def\df{d\hskip-.8ptf}
\def\dz{\mathcal{D}}
\def\dzp{\dz^\perp}
\def\fv{\mathcal{F}}
\def\gr{\mathcal{G}}
\def\fvp{\fv_{\nrmh p}}
\def\wv{\mathcal{W}}
\def\vt{\mathcal{P}}
\def\tv{\mathcal{T}}
\def\vr{\mathcal{V}}
\def\xs{J}
\def\xl{S}
\def\cs{\mathcal{B}}
\def\zy{\mathcal{Z}}
\def\vtx{\vt_{\nh x}}
\def\fh{f}
\def\g{\mathtt{g}}
\def\rc{\theta}
\def\jm{\mathcal{I}}
\def\ke{\mathcal{K}}
\def\xc{\mathcal{X}_c}
\def\lz{\mathcal{L}}
\def\dla{\mathcal{D}_{\hskip-2ptL}^*}
\def\Lie{\pounds}
\def\lv{\Lie\hskip-1.2pt_v\w}
\def\lo{\lz_0}
\def\xe{\mathcal{E}}
\def\eo{\xe_0}
\def\lsq{\mathsf{[}}
\def\rsq{\mathsf{]}}
\def\hga{\hskip2.3pt\widehat{\hskip-2.3pt\gamma\hskip-2pt}\hskip2pt}
\def\hm{\hskip1.9pt\widehat{\hskip-1.9ptM\hskip-.2pt}\hskip.2pt}
\def\hg{\hskip.9pt\widehat{\hskip-.9pt\g\hskip-.9pt}\hskip.9pt}
\def\hna{\hskip.2pt\widehat{\hskip-.2pt\nabla\hskip-1.6pt}\hskip1.6pt}
\def\hdz{\hskip.9pt\widehat{\hskip-.9pt\dz\hskip-.9pt}\hskip.9pt}
\def\hdp{\hskip.9pt\widehat{\hskip-.9pt\dz\hskip-.9pt}\hskip.9pt^\perp}
\def\hmt{\hskip1.9pt\widehat{\hskip-1.9ptM\hskip-.5pt}_t}
\def\hmz{\hskip1.9pt\widehat{\hskip-1.9ptM\hskip-.5pt}_0}
\def\hmp{\hskip1.9pt\widehat{\hskip-1.9ptM\hskip-.5pt}_p}
\def\hk{\hskip1.5pt\widehat{\hskip-1.5ptK\hskip-.5pt}\hskip.5pt}
\def\hq{\hskip1.5pt\widehat{\hskip-1.5ptQ\hskip-.5pt}\hskip.5pt}
\def\txm{{T\hskip-3.5pt_x\w M}}
\def\tyhm{{T\hskip-3.5pt_y\w\hm}}
\def\q{q}
\def\bq{\hat q}
\def\p{p}
\def\w{^{\phantom i}}
\def\x{v}
\def\y{y}
\def\vp{{\tau\hskip-4.55pt\iota\hskip.6pt}}
\def\evp{{\tau\hskip-3.55pt\iota\hskip.6pt}}
\def\vd{\vt\hh'}
\def\vdx{\vd{}\hskip-4.5pt_x}
\def\bz{b\hh}
\def\fe{F}
\def\fy{\phi}
\def\vl{\Lambda}
\def\hy{\mathcal{V}}
\def\vh{h}
\def\bc{C}
\def\mv{V}
\def\vo{V_{\nnh0}}
\def\ao{A_0}
\def\bo{B_0}
\def\uv{\mathcal{U}}
\def\sv{\mathcal{S}}
\def\svp{\sv_p}
\def\xv{\mathcal{X}}
\def\xvp{\xv_p}
\def\yv{\mathcal{Y}}
\def\yvp{\yv_p}
\def\zv{\mathcal{Z}}
\def\zvp{\zv_p}
\def\cv{\mathcal{C}}
\def\dy{\mathcal{D}}
\def\nv{\mathcal{N}}
\def\iv{\mathcal{I}}
\def\gkp{\Sigma}
\def\ret{\sigma}
\def\taw{\uptau}%{{\tau\hskip-4.55pt\iota\hskip.6pt}}%
\def\hs{\hskip.7pt}
\def\hh{\hskip.4pt}
\def\hn{\hskip-.4pt}
\def\nh{\hskip-.7pt}
\def\nnh{\hskip-1pt}
\def\hrz{^{\hskip.5pt\text{\rm hrz}}}
\def\vrt{^{\hskip.2pt\text{\rm vrt}}}
\def\vt{\varTheta}
\def\mtr{\Theta}
\def\op{\varTheta}
\def\vg{\varGamma}
\def\my{\mu}
\def\ny{\nu}
\def\gy{\lambda}
\def\lp{\lambda}
\def\ax{\alpha}
\def\lf{\widetilde{\lp}}
\def\bx{\beta}
\def\ay{a}
\def\by{b}
\def\gp{\mathrm{G}}
\def\hp{\mathrm{H}}
\def\kp{\mathrm{K}}
\def\gm{\gamma}
\def\Gm{\Gamma}
\def\Lm{\Lambda}
\def\Dt{\Delta}
\def\dg{\Delta}
\def\sj{\sigma}
\def\lg{\langle}
\def\rg{\rangle}
\def\lr{\langle\hh\cdot\hs,\hn\cdot\hh\rangle}
\def\vs{vector space}
\def\rvs{real vector space}
\def\vf{vector field}
\def\tf{tensor field}
\def\tvn{the vertical distribution}
\def\dn{distribution}
\def\pt{point}
\def\tc{tor\-sion\-free connection}
\def\ea{equi\-af\-fine}
\def\rt{Ric\-ci tensor}
\def\pde{partial differential equation}
\def\pf{projectively flat}
\def\pfs{projectively flat surface}
\def\pfc{projectively flat connection}
\def\pftc{projectively flat tor\-sion\-free connection}
\def\su{surface}
\def\sco{simply connected}
\def\psr{pseu\-\hbox{do\hs-}Riem\-ann\-i\-an}
\def\inv{-in\-var\-i\-ant}
\def\trinv{trans\-la\-tion\inv}
\def\feo{dif\-feo\-mor\-phism}
\def\feic{dif\-feo\-mor\-phic}
\def\feicly{dif\-feo\-mor\-phi\-cal\-ly}
\def\Feicly{Dif-feo\-mor\-phi\-cal\-ly}
\def\diml{-di\-men\-sion\-al}
\def\prl{-par\-al\-lel}
\def\skc{skew-sym\-met\-ric}
\def\sky{skew-sym\-me\-try}
\def\Sky{Skew-sym\-me\-try}
\def\dbly{-dif\-fer\-en\-ti\-a\-bly}
\def\cf{con\-for\-mal\-ly flat}
\def\ls{locally symmetric}
\def\ecs{essentially con\-for\-mal\-ly symmetric}
\def\rr{Ric\-ci-re\-cur\-rent}
\def\kf{Killing field}
\def\om{\omega}
\def\vol{\varOmega}
\def\dv{\delta}
\def\ve{\varepsilon}
\def\zt{\zeta}
\def\kx{\kappa}
\def\mf{manifold}
\def\mfd{-man\-i\-fold}
\def\bmf{base manifold}
\def\bd{bundle}
\def\tbd{tangent bundle}
\def\ctb{cotangent bundle}
\def\bp{bundle projection}
\def\prc{pseu\-\hbox{do\hs-}Riem\-ann\-i\-an metric}
\def\prd{pseu\-\hbox{do\hs-}Riem\-ann\-i\-an manifold}
\def\Prd{pseu\-\hbox{do\hs-}Riem\-ann\-i\-an manifold}
\def\npd{null parallel distribution}
\def\pj{-pro\-ject\-a\-ble}
\def\pd{-pro\-ject\-ed}
\def\lcc{Le\-vi-Ci\-vi\-ta connection}
\def\vb{vector bundle}
\def\vbm{vec\-tor-bun\-dle morphism}
\def\kerd{\text{\rm Ker}\hskip2.7ptd}
\def\ro{\rho}
\def\sy{\sigma}
\def\ts{total space}
\def\pmb{\pi}

\newtheorem{theorem}{Theorem}[section] 
\newtheorem{proposition}[theorem]{Proposition} 
\newtheorem{lemma}[theorem]{Lemma} 
\newtheorem{corollary}[theorem]{Corollary} 
  
\theoremstyle{definition} 
  
\newtheorem{defn}[theorem]{Definition} 
\newtheorem{notation}[theorem]{Notation} 
\newtheorem{example}[theorem]{Example} 
\newtheorem{conj}[theorem]{Conjecture} 
\newtheorem{prob}[theorem]{Problem} 
  
\theoremstyle{remark} 
  
\newtheorem{remark}[theorem]{Remark}

\title[Parallel Weyl tensor]{Compact locally homogeneous manifolds\\ 
with parallel Weyl tensor}
\author[A. Derdzinski]{Andrzej Derdzinski} 
\address{Department of Mathematics, The Ohio State University, 
Columbus, OH 43210} 
\email{andrzej@math.ohio-state.edu} 
\author[I.\ Terek]{Ivo Terek} 
\address{Department of Mathematics, The Ohio State University, 
Columbus, OH 43210} 
\email{terekcouto.1@osu.edu} 
\subjclass[2020]{Primary 53C50}
\keywords{Parallel Weyl tensor, con\-for\-mal\-ly symmetric manifold, compact
pseudo-Riemannian manifold}
\def\leftmark{A.\ Derdzinski \&\ I.\ Terek}
\def\rightmark{Parallel Weyl tensor}

\begin{abstract}
We construct new examples of compact ECS manifolds, that is, of 
pseu\-do\hs-Riem\-ann\-i\-an manifolds with parallel Weyl tensor that are
neither con\-for\-mal\-ly flat nor locally symmetric. Every ECS manifold has
rank 1 or 2, the rank being the dimension of a distinguished null parallel
distribution discovered by Ol\-szak. 
Previously known examples of compact ECS manifolds, in every dimension  
greater than 4, were all of rank 1, geodesically complete, and 
none of them locally homogeneous. 
By contrast, our new examples -- all of them geodesically incomplete --
realize all odd dimensions starting from 5 and are this time of rank 2, as
well as locally homogeneous.
\end{abstract}

\maketitle

\setcounter{section}{0}
\setcounter{theorem}{0}
\renewcommand{\theequation}{\arabic{section}.\arabic{equation}}
\section*{Introduction}%\label{in}
\setcounter{equation}{0}
By an {\it ECS manifold\/} \cite{derdzinski-roter-07} -- short for 
`essentially con\-for\-mal\-ly symmetric' -- one means a 
pseu\-\hbox{do\hskip.7pt-}Riem\-ann\-i\-an manifold 
of dimension $\,n\ge4\,$ having nonzero parallel Weyl tensor $\,W\nnh$, and
not being locally symmetric. Its {\it rank\/} $\,d\in\{1,2\}\,$ is the 
dimension of its {\it Ol\-szak distribution\/}
\cite{olszak}, \cite[p.\ 119]{derdzinski-roter-09}, the null parallel 
distribution $\,\dz$, the sections of which are the vector fields
corresponding via the metric to $\,1$-forms $\,\xi\,$ such that 
$\,\xi\wedge[W(v,v'\nh,\,\cdot\,,\,\cdot\,)]=0\,$ for all 
vector fields $\,v,v'\nh$. (The term `con\-for\-mal\-ly symmetric' should not
be misconstrued as referring to con\-for\-mal geometry.)

ECS manifolds are of obvious interest
\cite{cahen-kerbrat,hotlos,mantica-suh,schliebner,
deszcz-glogowska-hotlos-zafindratafa,
deszcz-glogowska-hotlos-petrovic-torgasev-zafindratafa} 
due to naturality and simplicity of
the condition $\,\nabla W\hskip-1.7pt=0$. 
Roter proved the existence of ECS manifolds 
\cite[Corol\-lary~3]{roter} in all dimensions $\,n\ge4\,$ and showed that
their metrics are necessarily indefinite
\cite[Theorem~2]{derdzinski-roter-77}. Locally homogeneous ECS manifolds of
either rank exist \cite{derdzinski-78} for all $\,n\ge4$. The 
local structure of ECS manifolds has been completely described
\cite{derdzinski-roter-09}.

Examples of {\it compact rank-one\/} ECS manifolds are known 
\cite{derdzinski-roter-10,derdzinski-terek-ne} in every dimension $\,n\ge5$. 
They are geodesically complete and not locally homogeneous, which 
%The examples in \cite{derdzinski-roter-10,derdzinski-terek-ne} thus
raises three obvious questions: Can a compact ECS manifold have rank two, or 
be locally homogeneous, or geodesically incomplete?

This paper answers all three in the affirmative, for every {\it odd\/}
dimension $\,n\ge5$.

Just like in \cite{derdzinski-roter-10,derdzinski-terek-ne}, our examples are 
dif\-feo\-mor\-phic to nontrivial torus bundles over the circle, and arise as
quotients of certain explicitly described simply connected ``model'' 
manifolds $\,\hm\,$ under free and properly dis\-con\-tin\-u\-ous actions on
$\,\hm$ of suitable groups $\,\Gm\,$ of isometries. However, selecting such
objects involves two aspects, analytical for $\,\hm\,$ (the existence of a
specific function $\,f\,$ of a real variable) and combinatorial for 
$\,\Gm\nh$, and it is here that our approach fundamentally differs from 
\cite{derdzinski-roter-10} and \cite{derdzinski-terek-ne}. Whereas in those
two papers the combinatorial part was trivial, and finding $\,f\,$ required
extensive work -- a messy explicit construction in
\cite{derdzinski-roter-10}, only good for dimensions $\,n\,$ congruent to
$\,5\,$ modulo $\,3$, and a deformation argument applied to uninteresting
constant functions in \cite{derdzinski-terek-ne} -- the situation here is the
exact opposite: $\,f\,$ comes from the very simple formula (\ref{exa}), while 
the groups $\,\Gm\hs$ arise via combinatorial structures ($\bbZ$-spec\-tral
systems), the existence of which we can only establish, with some effort, in
Theorem~\ref{zspsy}, for odd dimensions $\,n$.

Every $\,\bbZ$-spec\-tral system gives rise to a free Abel\-i\-an group
$\,\Sigma\,$ of isometries in each model manifold of a suitable type,
associated with a narrow class of choices of the function $\,f\nh$, so that
$\,\Sigma\,$ satisfies conditions (\ref{ace}), which in turn allows us 
to extend $\,\Sigma\,$ to the required group $\,\Gm\nh$, leading
to a compact quotient manifold. See Theorem~\ref{ccdro}. (Our argument used
to derive Theorem~\ref{ccdro} from (\ref{ace}) is a modified version of those
in \cite{derdzinski-roter-10} and \cite{derdzinski-terek-ne}.) 
One such choice of $\,f\nh$, namely, (\ref{exa}), makes the resulting compact 
rank-two ECS manifolds locally homogeneous (Theorem~\ref{lchom}).
They are also all incomplete, for rather obvious reasons (Remark~\ref{lhinc}).

The preceding sentence leads to a further question: For a compact ECS
manifold, can one have 
incompleteness without local homogeneity? We answer it in the affirmative -- 
with any $\,f\,$ given by (\ref{exa}), there is an
in\-fi\-nite-di\-men\-sion\-al freedom of deforming it, so that
Theorem~\ref{ccdro} still applies, giving rise to compact quotient ECS
manifolds which are still incomplete, but this time not locally homogeneous. 
They belong to a wider class of compact rank-two ECS manifolds, called {\it 
di\-la\-tion\-al}. Since they are arguably of less interest than the 
lo\-cal\-ly-ho\-mo\-ge\-ne\-ous ones, we relegate their
presentation to Appendix~B.

In \cite[Theorem E]{derdzinski-terek-ms} we show that neither local
homogeneity nor the di\-la\-tion\-al property can occur for a compact rank-one
ECS manifold which satisfies a natural {\it genericity condition\/} imposed on
the Weyl tensor. In the case of our simply connected ``model'' manifolds
(Section~\ref{sd}) of dimensions $\,n\ge4$, genericity means that 
$\,\mathrm{rank}\hskip1.7ptA=n-3$, for a certain nonzero nil\-po\-tent
en\-do\-mor\-phism $\,A\,$ of an $\,(n-2)$-di\-men\-sion\-al vector space used
in constructing the model \cite[formula (6.4)]{derdzinski-terek-ms},
\cite[Remark 5.4]{derdzinski-terek-ro}. The models leading to our rank-two
examples, in odd dimensions $\,n\ge5$, all have
$\,\mathrm{rank}\hskip1.7ptA=1$. See
formula (\ref{aem}). They thus represent the maximum extent of
non\-ge\-ner\-ic\-i\-ty possible in the category of nonzero nil\-po\-tent
en\-do\-mor\-phisms.

We do not know whether lo\-cal\-ly-ho\-mo\-ge\-ne\-ous (or di\-la\-tion\-al)
compact ECS manifolds exist in any {\it even\/} dimension $\,n\ge4$. However,
if they 
do, they cannot be constructed by the same method as our odd-di\-men\-sion\-al
examples. Namely, as we observe at the end of Section~\ref{zs}, for every
$\,\bbZ$-spec\-tral system $\,(m,k,E,\xs)$, the integer $\,m$, corresponding
to the dimension $\,n=m+2$, is necessarily odd.

\renewcommand{\thetheorem}{\thesection.\arabic{theorem}}
\section{Preliminaries}\label{pr}
\setcounter{equation}{0}
\begin{lemma}\label{qpqiz}Let\/ $\,q\in(0,\infty)\smallsetminus\{1\}\,$ and\/
$\,q+q\nh^{-\nnh1}\nh\in\bbZ$. If\/ $\,\gy_0\w,\dots,\gy_m\w$ are powers of
$\,q\,$ with integer exponents, forming pairs of mutual inverses, including
the value $\,1\,$ as its own inverse when $\,m\,$ is even, then\/
$\,\gy_0\w,\dots,\gy_m\w$ form the spectrum of a matrix in\/ 
$\,\mathrm{GL}\hh(m+1,\bbZ)$.
\end{lemma}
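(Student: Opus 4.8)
The plan is to exploit the fact that the hypothesis $\,q+q^{-\nnh1}\nh\in\bbZ\,$ makes $\,q\,$ a unit in a real quadratic order, so that a single $\,2\times2\,$ integer matrix simultaneously realizes $\,q\,$ and $\,q^{-\nnh1}$ as its eigenvalues. Writing $\,t=q+q^{-\nnh1}\nh\in\bbZ$, the number $\,q\,$ is a root of the monic integer polynomial $\,x^2-tx+1$, whose two roots are exactly $\,q\,$ and $\,q^{-\nnh1}$ (their product being $\,1\,$ and their sum $\,t$). Hence the companion matrix
\[
C\,=\,\begin{pmatrix}0&-1\\[2pt]1&t\end{pmatrix}
\]
lies in $\,\mathrm{SL}\hh(2,\bbZ)\,$ — its determinant equals $\,1\,$ — and has characteristic polynomial $\,x^2-tx+1$, that is, spectrum $\,\{q,q^{-\nnh1}\}$.

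The key step is to pass to integer powers. Since the prescribed values are powers of $\,q\,$ with integer exponents forming pairs of mutual inverses, each such pair can be written as $\,\{q^{\hh j}\nnh,q^{-j}\}\,$ with $\,j\ge0$. For every such $\,j\,$ the matrix $\,C^{\hh j}$ is again integral, satisfies $\,\det C^{\hh j}\nnh=1$, and has as eigenvalues the $\,j$-th powers $\,q^{\hh j}\nnh,q^{-j}$ of the eigenvalues of $\,C$; equivalently its characteristic polynomial is $\,x^2-(q^{\hh j}\nnh+q^{-j})x+1$, the integer trace $\,q^{\hh j}\nnh+q^{-j}$ being governed by the recurrence $\,s_{j+1}=t\hh s_j-s_{j-1}\,$ with $\,s_0=2$ and $\,s_1=t$. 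Thus to each pair $\,\{q^{\hh j}\nnh,q^{-j}\}\,$ I would associate the block $\,C^{\hh j}\nnh\in\mathrm{SL}\hh(2,\bbZ)\,$ having precisely that pair as its spectrum, while to the self-inverse value $\,1\,$ that the parity of $\,m\,$ forces when $\,m\,$ is even I would associate the $\,1\times1\,$ block $\,[\hh1\hh]$.

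Finally I would assemble all these blocks diagonally into one matrix $\,M\,$ of size $\,(m+1)\times(m+1)$. It is integral, its determinant is the product of the block determinants and hence equals $\,1$, so $\,M\in\mathrm{GL}\hh(m+1,\bbZ)\,$ (indeed in $\,\mathrm{SL}$), and its spectrum, counted with multiplicity, is the union of the block spectra, namely exactly $\,\gy_0\w,\dots,\gy_m\w$. This gives the claim. There is no genuine analytic obstacle here: the one decisive observation is that $\,q+q^{-\nnh1}\nh\in\bbZ\,$ lets a mutual-inverse pair be realized by an integer unimodular matrix, after which everything follows by taking powers and forming a direct sum. The only points requiring a word of care are the bookkeeping of reducing each pair to a nonnegative exponent $\,j\,$ (though $\,C^{-j}$ would serve equally well, being integral since $\,\det C=1$) and confirming that the unpaired eigenvalue dictated by the parity of $\,m\,$ is precisely $\,1$, matching the $\,1\times1\,$ block.
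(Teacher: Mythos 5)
Your proof is correct, and it takes a recognizably different route from the paper's, although both hinge on the same reduction to mutually inverse pairs and the quadratic $x^2-(q^a+q^{-a})\hh x+1$. The paper works entirely at the level of polynomials: it observes that it suffices (citing \cite[p.\ 75]{derdzinski-roter-10}) to exhibit a degree $m+1$ integer polynomial with leading coefficient $(-1)^{m+1}$ and constant term $1$ whose roots are $\gy_0\w,\dots,\gy_m\w$, and then forms the product of the quadratics above (and possibly the linear factor $1-x$), the integrality of each coefficient $q^a+q^{-a}$ being justified by the Chebyshev-type fact that $q^a+q^{-a}$ is a monic integer polynomial evaluated at $q+q^{-1}$. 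You instead construct the matrix directly: the companion matrix $C$ of $x^2-tx+1$ with $t=q+q^{-1}$, its powers $C^{\hh j}$ realizing each pair $\{q^{\hh j},q^{-j}\}$, a block $[\hh1\hh]$ for the self-inverse value (correctly identified as $1$, since $q>0$, $q\ne1$ force $q^{2a}=1$ to give $a=0$), and a block-diagonal direct sum. This buys self-containedness on two counts: no appeal to the cited criterion from \cite{derdzinski-roter-10}, and no need for the integer-polynomial identity for $q^a+q^{-a}$, since integrality of $C^{\hh j}$ (and of $C^{-j}$, as $\det C=1$) is automatic -- the recurrence $s_{j+1}=t\hh s_j-s_{j-1}$ you mention is indeed only confirmation, not a needed step. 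What the paper's route buys is a single companion-type matrix produced by a reusable criterion; but for the sole place the lemma is invoked (Remark~\ref{relev} feeding into Remark~\ref{algeq}, which requires only \emph{some} matrix in $\mathrm{GL}\hh(m+1,\bbZ)$ with the given spectrum), your block-diagonal $M\in\mathrm{SL}\hh(m+1,\bbZ)$ serves equally well, since in that application the eigenvalues are distinct and the spectrum, with multiplicity, of a block-diagonal matrix is the union of the block spectra.
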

\begin{proof}It suffices, cf.\ \cite[p.\ 75]{derdzinski-roter-10}, to show
that $\,\gy_0\w,\dots,\gy_m\w$ are the roots of a degree $\,m+1\,$ polynomial
with integer coefficients which has the leading coefficient $\,(-\nnh1)^{m+1}$
and the constant term $\,1$. This is immediate if $\,m=0\,$ and 
$\,\gy_0\w=1$, or $\,m=1\,$ and $\,(\gy_0\w,\gy_1\w)=(q,q\nh^{-\nnh1})$,
or $\,m=1\,$ and $\,(\gy_0\w,\gy_1\w)=(q^a\nh,q\nh^{-\nh a})\,$ with any
positive integer $\,a$. (The last claim is a well-known consequence of 
the preceding one, since $\,q^a\nh+q\nh^{-\nh a}$ equals a specific monic
degree $\,a\,$ polynomial with integer coefficients, evaluated on 
$\,q+q\nh^{-\nnh1}\nh$.) The required degree $\,m+1\,$ polynomial is the
product of the quadratic (and possibly linear) ones arising as above
when $\,m=0\,$ or $\,m=1$.
\end{proof}
\begin{remark}\label{semnu}We call a pseu\-\hbox{do\hs-}Euclid\-e\-an inner
product $\,\lr\,$ on an $\,m$-di\-men\-sion\-al real vector space $\,\mv\nnh$ 
{\it sem\-i-neu\-tral\/} if its positive and negative indices differ by at 
most one. Clearly, the matrix representing $\,\lr\,$ in a suitable basis 
$\,e\nh_1\w,\dots,e\nh_m\w$ of $\,\mv\hs$  
has zero entries except those on the main anti\-di\-ag\-o\-nal, all equal
to some sign factor $\,\ve=\pm1$, which for even $\,m\,$ may be assumed equal
to $\,1$, but is unique for odd $\,m$, as it then equals the difference of 
the two %positive and negative
indices. Equivalently,
\begin{equation}\label{eie}
\langle e\nh_i\w,e\nh_k\w\rangle=\ve\delta\nh_{i\hn j}\w\mathrm{\ for\ all\ 
}\,i,j\in\{1,\dots,m\}\mathrm{,\ where\ }\,k=m+1-j\mathrm{,\ and\ 
}\,\ve=\pm1\hh.
\end{equation}
Given $\,\mv\nnh,\lr,e\nh_1\w,\dots,e\nh_m\w$ as above,
$\,q\in\hn(0,\infty)$, and $\,(a(1),\dots\hn,a(m))\in\mr$ with
\begin{equation}\label{aoe}
a(1)=1\,\mathrm{\ \ and\ \ }\,a(i)+a(j)=0\,\mathrm{\ whenever\ }\,i+j=m+1\hh,
\end{equation}
we define a nonzero, trace\-less, $\,\lr$-self-ad\-joint linear
en\-do\-mor\-phism $\,A\,$ of $\,\mv\hs$ and a linear $\,\lr$-isom\-e\-try
$\,\bc:\mv\nh\to\mv\hs$ such that
$\,\bc\hskip-1.8ptA\bc^{-\nnh1}\nnh=\hs q^2\hskip-2.3ptA\,$ by setting
\begin{equation}\label{aem}
Ae\nh_m\w=e\nh_1\w\hh,\hskip8ptAe\nh_i\w=0\,\mathrm{\ if\ }\,i<m\hh,\hskip8pt
\mathrm{\ and\ }\,\bc e\nh_i\w=q^{a(i)}\nh e\nh_i\w\mathrm{\ for\ all\ }\,i\hh.
\end{equation}
In fact, as $\,\langle Ae\nh_m\w,e\nh_m\w\rangle\,$ is the only nonzero entry of
the form $\,\langle Ae\nh_i\w,e\nh_j\w\rangle$, the matrix of $\,A$ in our
basis has zeros on the diagonal; 
$\,\bc\hskip-1.8ptA\bc^{-\nnh1}\nh e\nh_i\w$ and $\,q^2\hskip-2.3ptAe\nh_i\w$
are both zero if $\,i<m$ and both $\,q^2\nh e\nh_1\w$ when $\,i=m$,
while the spans of $\,e\nh_i\w,e\nh_j\w$ with $\,i+j=m+1$ form an orthogonal
decomposition of $\,\mv\hs$ into Lo\-rentz\-i\-an planes (and a line, for
odd $\,m\,$ and $\,i=j=(m+1)/2$), and in each plane $\,\bc\,$ acts as a 
Lo\-rentz\-i\-an boost.
\end{remark}
We phrase two more obvious facts as remarks, for easy reference.
\begin{remark}\label{eigli}Every family of eigen\-vec\-tors of an
en\-do\-mor\-phism of a vector space, corresponding to mutually distinct
eigen\-val\-ues, is linearly independent.
\end{remark}
\begin{remark}\label{algeq}If the $\,s\,$ characteristic roots of an 
en\-do\-mor\-phism $\,\varPi\,$ of an $\,s$-di\-men\-sion\-al real vector 
space $\,\mathcal{Y}\,$ are all real, distinct, and form the spectrum of a
matrix $\,\varXi\,$ in $\,\mathrm{GL}\hh(s,\bbZ)$, then
$\,\varPi(\Sigma)=\Sigma\,$ for some lattice $\,\Sigma\,$ in
$\,\mathcal{Y}\nh$. (This is true for $\,\varPi=\varXi\,$ and
$\,\mathcal{Y}=\bbR\nh^s\nh$, with $\,\Sigma=\bbZ\hn^s\nh$. The general case 
follows as the algebraic equivalence
type of a di\-ag\-o\-nal\-iz\-able 
en\-do\-mor\-phism is %uniquely
determined by its spectrum.)
\end{remark}

\section{$\bbZ$-spec\-tral systems}\label{zs}
\setcounter{equation}{0}
By a $\,\bbZ\nh${\it-spec\-tral system\/} we mean a quadruple $\,(m,k,E,\xs)\,$
consisting of integers $\,m,k\ge2$, an injective function
$\,E:\vr\to\bbZ\smallsetminus\{-\nnh1\}$, where $\,\vr\nh=\{1,\dots,2m\}$, and
a function $\,\xs:\vr\to\{0,1\}$, satisfying the following conditions for all 
$\,i,i'\nh\in\vr\nh$.
\begin{enumerate}
\item[(a)]$k+1=2E(1)\,$ (and so $\,k\,$ must be odd).
\item[(b)]$E(i)+E(i')=-\nnh1\,$ and $\,\xs(i)\ne\xs(i')\,$ whenever
 $\,i+i'\nh=2m+1$.
\item[(c)]$E(i)-E(i')=k\,$ and $\,\xs(i)\ne\xs(i')\,$ if $\,i'\nh=i+1\,$ is
even.
\item[(d)]The set $\,Y\nh=\{-\nnh1\}\cup\{E(i);i\in\vr\hs\mathrm{\ and\ 
}\xs(i)=1\}\,$ is symmetric about $\,0$.
\end{enumerate}
In terms of the pre\-im\-age $\,\xl=\xs^{-\nnh1}(1)=\{i\in\vr:\xs(i)=1\}$, the 
requirements imposed on $\,\xs\,$ state that $\,\xl\,$ is a simultaneous
selector for the two families,
\begin{equation}\label{sel}
\{\{i,i'\}\in\mathcal{P}\hskip-2pt_2\w(\vr):i+i'\nh=2m+1\}\hh,\hskip7pt
\{\{i,i'\}\in\mathcal{P}\hskip-2pt_2\w(\vr):i'\nh=i+1\,\mathrm{\ is\
even}\}\hh,
\end{equation}
of pairwise disjoint $\,2$-el\-e\-ment sub\-sets of $\,\vr\nh$, while 
$\,\xs\,$ equals the characteristic function of $\,\xl$. Here 
$\,\mathcal{P}\hskip-2pt_2\w(\vr)\,$ denotes the family of all
$\,2$-el\-e\-ment sub\-sets of $\,\vr\nh$. Thus, as $\,E\,$ was assumed
injective, with $\,|\hskip2.3pt|\,$ standing for cardinality,
\begin{equation}\label{mel}
|\xl|=|E(\xl)|
=m\hh,\hskip12ptY\nh=\{-\nnh1\}\cup E(\xl)\hh,\hskip12pt|\hh Y\nh|=m+1\hh.
\end{equation}
\begin{remark}\label{relev}What makes $\bbZ$-spec\-tral systems relevant for 
our purposes is the fact that, given any such system $\,(m,k,E,\xs)\,$ and any 
$\,q\in(0,\infty)\smallsetminus\{1\}\,$ with $\,q+q\nh^{-\nnh1}\nnh\in\bbZ$,
the $\,(m+1)$-el\-e\-ment set $\,\{q^a:a\in Y\}\,$ forms, according to
Lemma~\ref{qpqiz}, the spectrum of a matrix in $\,\mathrm{GL}\hh(m+1,\bbZ)$.
\end{remark}
\begin{theorem}\label{zspsy}There exist\/ $\,\bbZ\nh$-spec\-tral systems\/ 
$\,(m,k,E,\xs)\,$ having\/ $\,k=m+2$, which realize all odd values of\/
$\,m\ge3$. Specifically, for\/ $\,m=2\hh r-3\,$ and\/ $\,k=2\hh r-1$, with
any given integer $\,r\ge3$, writing\/ $\,(i,i')=(2j-1,2j)\,$ whenever\/ 
$\,i,i'\nh\in\vr\hs$ and\/ $\,i'\nh=i+1\,$ is even, we may set
\[
(E(2j-1),E(2j))\,
=\,\begin{cases}{(r,\,-r+1)\,\,}&\mathrm{if\ }j=1$$\hskip.4pt,\cr
{(j-1,\,-\nh2\hh r+j)\,\,}&\mathrm{if\ }1<j<r-1\,\mathrm{\ and\ 
}\,r\,\mathrm{\ is\ even}$$\hskip.4pt,\cr
{(2\hh r+j-2,\,j-1)\,\,}&\mathrm{if\ }1<j<r-1\,\mathrm{\ and\ 
}\,r\,\mathrm{\ is\ odd}$$\hskip.4pt,\cr
{(r-1,\,-r)\,\,}&\mathrm{if\ }j=r-1$$\hskip.4pt,\cr
{(j-2\hh r+2,\,j-4r+3)\,\,}&\mathrm{if\ }r-1<j<m\,\mathrm{\ and\ 
}\,r\,\mathrm{\ is\ odd}$$\hskip.4pt,\cr
{(j+1,\,j-2\hh r+2)\,\,}&\mathrm{if\ }r-1<j<m\,\mathrm{\ and\ 
}\,r\,\mathrm{\ is\ even}$$\hskip.4pt,\cr
{(r-2,\,-r-1)\,\,}&\mathrm{if\ }j=m$$\hskip.4pt,\cr
\end{cases}
\]
and declare\/ $\,\xs(i)\,$ to be\/ $\,1\,$ or\/ $\,0\,$
depending on whether\/ $\,E(i)\,$ is odd or even, so that\/ $\,Y\nnh$ in\/
{\rm(d)} obviously
consists of\/ $\,-\nnh1\,$ and all values of\/ $\,E\,$ which are odd\/{\rm:}
\begin{equation}\label{wye}
Y\nh=\{-\nnh1\}\cup[\bbZ_{\mathrm{odd}}\w\cap E(\vr)]\hh,\mathrm{\ \ where\
}\,\bbZ_{\mathrm{odd}}\w=\bbZ\smallsetminus2\bbZ\hh.
\end{equation}
Also, $\,Y\nnh$ is the intersection of\/ $\,\bbZ_{\mathrm{odd}}\w$ with one,
or a union of three, closed intervals\/{\rm:}
\begin{equation}\label{uni}
\begin{array}{l}
Y\nh
=\bbZ_{\mathrm{odd}}\w\cap[-\nh2\hh r+3,2\hh r-3]\,\mathrm{\ 
for\ even\ }\,r\,\mathrm{\ while,\ if\ }\,r\,\mathrm{\ is\ odd,}\\
Y\nh=\bbZ_{\mathrm{odd}}\w\cap([-3\hh r+4,-\nh2\hh r-1]\cup[-r,r]
\cup[2\hh r+1,3r-4])\hh.
\end{array}
\end{equation}
\end{theorem}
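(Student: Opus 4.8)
The plan is to treat the displayed formulas as a definition and then to verify, in order, the axioms (a)--(d) of a $\,\bbZ$-spec\-tral system, the injectivity of $\,E$, and the two descriptions (\ref{wye})--(\ref{uni}) of the set $\,Y\nh$. Two elementary observations do most of the work. First, the branch $\,j=1\,$ gives $\,E(1)=r$, so $\,2E(1)=2r=k+1$, which is exactly (a) and forces $\,k\,$ odd. Second, for the consecutive-even pair indexed by $\,j\,$ one has $\,E(2j-1)-E(2j)=2\hh r-1=k\,$ identically: in each of the seven branches this is a one-line subtraction, so the numerical half of (c) holds for every $\,j\in\{1,\dots,m\}$.

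Next I would verify the numerical half of (b). The anti\-di\-ag\-o\-nal involution $\,i\mapsto 2m+1-i=4r-5-i\,$ interchanges odd and even indices, since $\,2m+1\,$ is odd, and in the $\,j$-no\-ta\-tion, with $\,j'\nh=m+1-j=2r-2-j$, it pairs $\,E(2j-1)\,$ with $\,E(2j')\,$ and $\,E(2j)\,$ with $\,E(2j'-1)$. These two identities are carried into each other by $\,j\leftrightarrow j'\nh$, so it suffices to check $\,E(2j-1)+E(2j')=-\nnh1\,$ for all $\,j$. Since $\,j\leftrightarrow j'\,$ swaps the inner-left range $\,1<j<r-1\,$ with the inner-right range $\,r-1<j<m$, fixes $\,j=r-1$, and swaps $\,j=1\,$ with $\,j=m$, each check combines one formula for $\,E(2j-1)\,$ with the partner formula for $\,E(2j')$, the parity of $\,r\,$ dictating which inner branch applies; in every case the sum collapses to $\,-\nnh1$. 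The parity halves of (b) and (c) then require no separate computation: $\,\xs\,$ is by construction the parity indicator of $\,E$, while the difference $\,k\,$ and the sum $\,-\nnh1\,$ are both odd, so the two members of every pair of each family in (\ref{sel}) have opposite parities and are separated by $\,\xs$. Thus $\,\xl=\xs^{-\nnh1}(1)\,$ is the desired simultaneous selector and (b), (c) hold in full.

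It remains to determine the image $\,E(\vr)$, which at once yields injectivity, the exclusion of $\,-\nnh1$, and condition (d) together with (\ref{wye})--(\ref{uni}). Here, separately for even and odd $\,r$, I would collect the $\,m\,$ values $\,E(2j-1)\,$ and the $\,m\,$ values $\,E(2j)\,$ and observe that they assemble into a handful of blocks of consecutive integers which abut with neither gap nor overlap. For even $\,r\,$ the odd-in\-dexed entries fill $\,\{1,\dots,2r-3\}\,$ and the even-in\-dexed ones fill $\,\{-(2r-2),\dots,-2\}$; hence $\,E\,$ is injective, $\,-\nnh1\notin E(\vr)$, the identity $\,\{E(i):\xs(i)=1\}=\bbZ_{\mathrm{odd}}\cap E(\vr)\,$ of (\ref{wye}) is immediate from the definition of $\,\xs$, and intersecting $\,E(\vr)\,$ with $\,\bbZ_{\mathrm{odd}}\,$ and adjoining $\,-\nnh1\,$ gives $\,Y=\bbZ_{\mathrm{odd}}\cap[-2r+3,2r-3]$. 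For odd $\,r\,$ the blocks instead tile $\,\{1,\dots,r\}\cup\{2r,\dots,3r-4\}\,$ and $\,\{-(r+1),\dots,-2\}\cup\{-(3r-3),\dots,-(2r+1)\}$, which after the same selection yields the three-in\-ter\-val description in (\ref{uni}).

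Finally, condition (d) is read off from (\ref{uni}): the relevant set is $\,\bbZ_{\mathrm{odd}}\,$ intersected with a union of closed intervals that is visibly symmetric about $\,0\,$ -- for odd $\,r\,$ the intervals $\,[2r+1,3r-4]\,$ and $\,[-3r+4,-2r-1]\,$ are negatives of one another while $\,[-r,r]\,$ is self-symmetric -- so $\,Y\,$ is symmetric about $\,0$. The main obstacle is precisely the block-tiling bookkeeping of this last step: keeping the boundary indices $\,j=1,\,r-1,\,m\,$ consistent with the two inner ranges, tracking the even/odd-$\,r\,$ dichotomy, and handling the degenerate small cases -- for $\,r=3\,$ the two inner ranges are empty and the two outer intervals in (\ref{uni}) collapse, leaving only $\,\bbZ_{\mathrm{odd}}\cap[-r,r]$. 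None of this is conceptually hard, but it demands care to avoid off-by-one slips.
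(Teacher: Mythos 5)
Your proposal is correct, and for most of the theorem it travels the paper's road: (a) and (c) follow by one-line substitutions, (b) via the involution $\,j\mapsto j'\nh=2\hh r-2-j\,$ (the paper phrases this as central symmetry of a matrix displaying $\,E$, with $\,j+j'\nh=2(r-1)$), and the parity halves of (b)--(c) from oddness of the sum $\,-\nnh1\,$ and the difference $\,k$, exactly as in the paper's opening paragraph. The one genuine divergence is how injectivity and the image $\,E(\vr)\,$ are obtained. You compute $\,E(\vr)\,$ explicitly on both sides of zero, checking that all blocks of consecutive values abut without overlap; the paper instead verifies only the positive part -- its (\ref{rng}) and (\ref{atl}) yield at least $\,m=2\hh r-3\,$ positive values -- and then invokes the reflection $\,i\mapsto-\hs i-1\,$ of (\ref{cls}), itself a consequence of (b), to produce at least $\,m\,$ negative ones, whence injectivity, exactness of both counts, the interval description of $\,E(\vr)$, and $\,-\nnh1\notin E(\vr)\,$ all drop out simultaneously from $\,|\vr\hs|=2m$. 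Your route buys a fully explicit image (and your block lists agree with the paper's intervals, including the correct collapse of the outer intervals at $\,r=3$); the paper's counting trick buys roughly half the bookkeeping, which you yourself identify as the main hazard of the direct method. One small caution in your write-up: for odd $\,r\,$ the positive/negative split of $\,E(\vr)\,$ does not coincide with the odd-/even-indexed split, since $\,E(2j-1)=j-2\hh r+2<0\,$ for $\,r-1<j<m\,$ while $\,E(2j)=j-1>0\,$ for $\,1<j<r-1$; so your sentence about the odd-$\,r\,$ blocks must be read as describing the union over both parities of index, not -- as the parallel with the even-$\,r\,$ sentence might suggest -- as ``odd-indexed entries fill the positive part.'' With that reading, every numerical claim you make checks out.
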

\begin{proof}Once we establish (a) -- (c) for $\,E$, (b) -- (c)
for $\,\xs\,$ will follow: as
$\,E(i),E(i')\,$ in (a) -- (c) have different parities,
$\,\xs(i)\ne\xs(i')\,$ in $\,\{0,1\}$. 

Since $\,k=2\hh r-1$, (a) and (c) for $\,E\,$ are immediate, with 
$\,(i,i')=(2j-1,2j)$. To verify (b) for $\,E\,$ we display the definition of
$\,E\,$ in the matrix form:
\[
\left[\begin{matrix}1&2\cr
2j-1&2j \cr
2j-1&2j \cr
2\hh r-3&2\hh r-2\cr
2j'\nh-1&2j'\cr
2j'\nh-1&2j'\cr
2m-1&2m\end{matrix}\right]
\mapsto\left[\begin{matrix}E(1)&E(2)\cr
E(2j-1)&E(2j)\cr
E(2j-1)&E(2j)\cr
E(2\hh r-3)&E(2\hh r-2)\cr
E(2j'\nh-1)&E(2j')\cr
E(2j'\nh-1)&E(2j')\cr
E(2m-1)&E(2m)\end{matrix}\right]\nnh
=\nnh\left[\begin{matrix}r&-r+1\cr
j-1&-\nh2\hh r+j\cr
2\hh r+j-2&j-1\cr
r-1&-r\cr
j'\nh-2\hh r+2&j'\nh-4r+3\cr
j'\nh+1&j'\nh-2\hh r+2\cr
r-2&-r-1\end{matrix}\right]\nnh\nnh,
\]
where rows 3 and 5 (or, 2 and 6) are to be ignored if $\,r\,$ is
even (or, odd), while the ranges of $\,j\,$ and $\,j'$ are $\,1<j<r-1\,$ and
$\,r-1<j'\nh<m$.

In the first matrix above two entries have the sum $\,2m+1=4r-5\,$ if and
only if they lie symmetrically about the center of the matrix 
rectangle, with $\,j+j'\nh=2(r-1)$ (that is, with $\,j\,$ and $\,j'$
lying symmetrically about $\,r-1$). The same pairs of entries in the 
third matrix above have the sum $\,-\nnh1$, proving (b) for $\,E$.
Next,
\begin{equation}\label{rng}
\mathrm{the\ range\ }\,E(\vr)\,\mathrm{\ contains\ }\,\{1,\dots,r\}\hh,
\end{equation}
as the values $\,r-2,r-1,1\,$ appear in the first column %, rows 1, 5, 7,
of the third matrix, and $\,j-1\,$ for $\,j=2,\dots,r-2\,$ in row 2 or 3,
depending on parity of $\,r$. Also,
\begin{equation}\label{atl}
E(\vr)\,\mathrm{\ includes\ the\ }\,r-3\,\mathrm{\ values\
}\,\begin{cases}{r+1,\dots,2\hh r-3}&\mathrm{if\
}\,r\,\mathrm{\ is\ even}$$\hskip.4pt,\cr
{2\hh r,2\hh r+1,\dots,3r-4}&\mathrm{if\ }\,r\,\mathrm{\ is\
odd}$$\hskip.4pt.\cr
\end{cases}
\end{equation}
Namely, for even $\,m\,$ we get $\,j'\nh+1\,$ (row 6, with
$\,j'\nh=r,\dots,m-1=2\hh r-4$) while, if $\,r\,$ is odd, row 3 provides
$\,2\hh r+j-2$, with $\,j=2,\dots,r-2$. In addition, by (b),
\begin{equation}\label{cls}
E(\vr)\,\mathrm{\ is\ closed\ under\ the\ reflection\
}\,i\mapsto-\hs i-1\,\mathrm{\ about\ }\,-\hskip-3pt1/2\hh.
\end{equation}
Due to (\ref{rng}) -- (\ref{atl}), $\,E(\vr)\,$ contains at 
least $\,m=2\hh r-3\,$ positive integers, and -- according to (\ref{cls}) --
at least as many negative ones. Since $\,\vr\,$ has the cardinality $\,2m$,
injectivity of $\,E\,$ follows, and `at least' in the last sentence amounts to
{\it exactly}. Thus, from (\ref{rng}) -- (\ref{cls}), $\,E(\vr)\,$ is
the intersection of $\,\bbZ\,$ with the union of two or four closed intervals:
$\,[-\nh2\hh r+2,-\nh2]\cup[1,2\hh r-3]\,$ for even $\,r$, or, if $\,r\,$ is
odd,
\[
[-3\hh r+3,-\nh2\hh r-1]\cup[-r-1,-\nh2]\cup[1,r]\cup[2\hh r,3r-4]\hh.
\]
In particular, $\,-\nnh1\notin E(\vr)$. Finally, (\ref{uni}) is a trivial
consequence of this last description of $\,E(\vr)\,$ and (\ref{wye}), and it
clearly implies symmetry of $\,Y\nnh$ about $\,0$.
\end{proof}
In every $\,\bbZ$-spec\-tral system $\,(m,k,E,\xs)$, {\it the integer\/
$\,m\,$ must be odd}. Namely, since $\,\xl\,$ has the
si\-mul\-ta\-ne\-ous-se\-lec\-tor property -- see the line preceding
(\ref{sel}) -- when $\,i\in\xl\,$ is odd (or, even), 
$\,2m+1-i\in\vr\smallsetminus\xl\,$ will be even (or, odd), and so 
$\,2m-i\in\xl$ (or, respectively, $\,2m+2-i\in\xl$). In other words,
if $\,i\in\xl$, then $\,\{i,i'\}\subseteq\xl\,$ for the unique
$\,i'\nh\in\vr\hs$ having $\,i\equiv i'\hs\,$mod\hskip2pt$2\,$ and
$\,i+i'\nh=2m+1+(-\nnh1)^i\nh$. The resulting
sets $\,\{i,i'\}\,$ form a partition of $\,\xl\,$ and, clearly,
$\,i'\nh\ne i\,$ unless $\,m\,$ is odd (with $\,i'\nh=i\,$ equal to $\,m\,$ or
$\,m+1$). If $\,m\,$ were even, the $\,m$-el\-e\-ment set $\,\xl$ would thus
be partitioned into our $\,m/2\,$ disjoint $\,2$-el\-e\-ment sets
$\,\{i,i'\}$. %With $\,\equiv\,$ denoting congruence mod\hskip2pt$2$,
Oddness of $\,k$, due to (a), and (b) -- (c) would now give 
$\,E(i)\equiv E(i')\,\,$mod\hskip2pt$2\,$ on each such $\,\{i,i'\}$, making 
$\,\sum_{i\in\xl}\w E(i)\,$ even. This contradicts the equality 
$\,\sum_{i\in\xl}\w E(i)=1$, immediate, since
$\,E:\vr\to\bbZ\smallsetminus\{-\nnh1\}$ is injective, from symmetry about
$\,0\,$ of $\,Y\nh=\{-\nnh1\}\cup E(\xl)$, cf.\ (d) and (\ref{mel}).

\section{Standard di\-la\-tion\-al models}\label{sd}
\setcounter{equation}{0}
We define a {\it standard di\-la\-tion\-al ECS model\/} to be an 
$\,n$-di\-men\-sion\-al \hbox{pseu\-do\hs-} Riem\-ann\-i\-an manifold
\begin{equation}\label{met}
(\hm\nh,\hg)\,
=\,((0,\infty)\times\bbR\times\mv\nh,\,\kappa\,dt^2\nh+\,dt\,ds\hs+\hs\lr)\hh,
\end{equation}
built from the data $\,q,n,\mv\nh,\lr,A,\bc,f\,$ consisting of a real number 
$\,q\in(0,\infty)\smallsetminus\{1\}$ with $\,q+q\nh^{-\nnh1}\nnh\in\bbZ$,
an integer $\,n\ge4$, a real vector space $\,\mv\hh$ of dimension $\,n-2$,
a pseu\-do\hs-Euclid\-e\-an inner product $\,\lr\,$ on $\,\mv\nh$, a nonzero,
trace\-less, $\,\lr$-self-ad\-joint linear operator $\,A:\mv\nh\to\mv\nh$,
a linear $\,\lr$-isom\-e\-try $\,\bc:\mv\nh\to\mv\nh$, and a nonconstant
$\,C^\infty\nh$ function $\,f:(0,\infty)\to\hs\bbR$, satisfying the conditions
\begin{equation}\label{cnd}
\mathrm{a)}\hskip7pt\bc\hskip-1.8ptA\bc^{-\nnh1}\nnh
=\hs q^2\hskip-2.3ptA\hh,\hskip12pt\mathrm{b)}\hskip7ptf(t)
=q^2\nnh f(qt)\,\,\mathrm{\ for\ all\ }\,t\in(0,\infty)\hh.
\end{equation}
In (\ref{met}) we identify $\,dt,\,ds\,$ and the flat metric $\,\lr\,$ on
$\,\mv\hs$ with their pull\-backs
to $\,\hm\nh$, the function $\,\kappa:\hm\nh\to\bbR\,$ is defined by 
$\,\kappa(t,s,v)=f(t)\hskip.4pt\langle v,v\rangle+\langle Av,v\rangle$,
and $\,(t,s)$ are the Cartesian coordinates on $\,(0,\infty)\times\bbR$.

It is well known \cite[Theorem~4.1]{derdzinski-roter-09},
\cite[Sect.\,1]{derdzinski-terek-cm} that (\ref{met}) is an ECS manifold,
having rank one if $\,\mathrm{rank}\hs A>1$, and rank two when
$\,\mathrm{rank}\hs A=1$. 

The following text leading up to formula (\ref{cte}) repeats, almost verbatim,
some material from
\cite[Sect.\,6]{derdzinski-terek-ro}, albeit in a special case characterized
by (\ref{dia}); \cite[Sect.\,6]{derdzinski-terek-ro} also serves as a
reference for it, and (\ref{met}) stands, in the rest of this section, for the 
standard di\-la\-tion\-al model associated with fixed data 
$\,q,n,\mv\nh,\lr,A,\bc,f\nh$.

We denote by 
$\,\wv\hs$ and $\,\xe\hs$ the vector spaces of dimensions 
$\,2\,$ and $\,2(n-2)$ consisting of all $\,C^2\nh$ 
functions $\,\y:(0,\infty)\to\bbR$, or $\,u:(0,\infty)\to\mv\nh$, such that
\begin{equation}\label{ode}
\mathrm{i)}\hskip6pt\ddot\y\,=\,f\y\,\mathrm{\ \ or,\ respectively,\ \
}\mathrm{ii)}\hskip6pt\ddot u=f\nh u+Au\hh,\hskip9pt\mathrm{where\
}\,(\hskip2.3pt)\hskip-2.2pt\dot{\phantom o}\nh=\hs d/dt\hh.
\end{equation}
Let the operator $\,T\,$ act on functions $\,(0,\infty)\ni t\mapsto u(t)$,  
valued anywhere, by
\begin{equation}\label{tro}
[T\nnh u](t)\,=\,u(t/q)\hh,\hskip8pt\mathrm{so\ that\
(\ref{cnd}}\hyp\mathrm{b)\ reads\
}\,T\nnh f=q^2\nnh f.
\end{equation}
Thus, $\,T\,$ obviously preserves $\,\wv\nh$. We now {\it impose on\/
$\,f\,$ an additional requirement\/}:
\begin{equation}\label{dia}
\begin{array}{l}
T:\wv\hs\to\wv\,\mathrm{\ has\ %is\ di\-ag\-o\-nal\-iz\-able\ with\ 
two\ distinct\ eigen\-values\ }\,\mu^\pm\nnh\in(0,\infty)\,\mathrm{\ with\
positive}\\
\mathrm{eigen\-functions\ }\,\y\nh^+\nh\nnh,\y^-\nnh\in\nh\wv\nh\mathrm{,\ so\
that\ }\,T\nnh\y^\pm\nh=\mu^\pm\nnh\y^\pm\mathrm{\ \ and\ \
}\,\mu\nh^+\nnh\mu^-\nh=\hs\,q\nh^{-\nnh1}\nh,
\end{array}
\end{equation}
the last equality ($\det\hs T=q\nh^{-\nnh1}$ in $\,\wv\hh$) 
being immediate since the formula $\,\alpha(\y\nh^+\nnh,\y^-)
=\dot\y\nh^+\y^-\nnh-\y\nh^+\dot \y^-\hs$ (a constant!) defines an area form 
$\,\alpha\,$ on $\,\wv$ and $\,q\hs T^*\nh\alpha=\alpha$. 
The space $\,\xe\hs$ is not, in general, preserved either by $\,T\,$ 
or by $\,\bc\,$ acting val\-ue\-wise via $\,u\mapsto\bc u$, but the 
composition $\,\bc\hh T=T\hn\bc\,$ clearly leaves $\,\xe$ invariant, leading
to
\begin{equation}\label{cte}
\mathrm{\ the\ operator\ }\,\bc\hh T:\xe\nh\to\xe\hs\mathrm{\ given\ by\
}\,[(\bc\hh T)u](t)\,=\,\bc u(t/q)\hh.
\end{equation}
Next, given $\,(\hat r,\hat u),(r,u)\in\bbR\times\xe\nh$, we define mappings 
$\,\hga,\gamma:\hm\to\hm\,$ by
\begin{equation}\label{act}
\begin{array}{l}
\hga(t,s,v)
=\hs(qt,\hh-\langle\hat w(qt),\hh 2\hh\bc v
+\hat u(qt)\rangle+\hat r+s/q,\hh\bc v+\hat u(qt))\hh,\\
\gamma(t,s,v)
=\hs(t,\hh-\langle\dot u(t),\hh 2v+u(t)\rangle+r+s,v+u(t))\hh.
\end{array}
\end{equation}
where $\,\hat w=d\hat u/dt$. 
Both $\,\hga,\gamma\,$ lie in the isom\-e\-try group
$\,\mathrm{Iso}\hs(\hm\nh,\hg)\,$ \cite[formula (4.7)]{derdzinski-terek-ro}. 
We choose to treat $\hs(\hat r,\hat u)\in\bbR\times\xe\hs$ as fixed, while
allowing $\,(r,u)\,$ to range over $\,\bbR\times\xe\nh$. 
The set of all $\,\gamma\,$ arising via (\ref{act}) from all
$\,(r,u)\in\bbR\times\xe\hs$ forms a normal sub\-group 
$\,\hp\,$ of $\,\mathrm{Iso}\hs(\hm\nh,\hg)\,$
\cite[formula (4.8)]{derdzinski-terek-ro} and, as explained below,
\begin{equation}\label{oph}
\begin{array}{rl}
\mathrm{i)}&(r,u)(r'\nh,u')=(\varOmega(u',u)+r+r',\,u+u')\hh,\\
\mathrm{ii)}&\varPi(r,u)\,
=\,(2\hh\varOmega(\bc\hh T\nnh u,\hat u)\,+\,r/q,\,\bc\hh T\nnh u)\mathrm{,\
where}\\
\mathrm{iii)}&\varOmega:\xe\hn\times\hh\xe\nh\to\bbR\,\mathrm{\ is\ the\
symplectic\ form\ given\ by}\\
&\varOmega(u_1\w,u_2)=\langle\dot u_1\w,u_2\w\rangle
-\langle u_1\w,\dot u_2\w\rangle\mathrm{,\ and}\\
\mathrm{iv)}&(\bc\hh T)^*\nnh\varOmega=q\nh^{-\nnh1}\nh\varOmega\,\mathrm{\
for\ the\ operator\ }\,\bc\hh T:\xe\nh\to\xe\hs\mathrm{\ in\ (\ref{cte}).}
\end{array}
\end{equation}
Here (\ref{oph}-i) describes the group operation of $\,\hp\,$ under the
obvious identification $\,\hp=\bbR\times\xe\nh$, cf.\
\cite[(a) in Sect.\,4]{derdzinski-terek-ro}, the 
linear operator $\,\varPi:\bbR\times\xe\nh\to\bbR\times\xe\hs$ in 
(\ref{oph}-ii) equals 
$\,\hp\ni\gamma\mapsto\hga\gamma\hga\hs^{-\nnh1}\nh\in\hp$, 
the conjugation by $\hga$, cf.\ \cite[Remark 4.2]{derdzinski-terek-ro}, 
(\ref{oph}-iii) is immediate as self-ad\-joint\-ness of $\,A\,$ and
(\ref{ode}-ii) imply constancy of $\,\varOmega(u_1\w,u_2)$, and (\ref{oph}-iv)
is a consequence of (\ref{cte}).

Consider the following conditions imposed on two objects, $\,\lz\,$ and
$\,\Sigma$, with $\,\varPi$ as in (\ref{oph}-ii) 
for our fixed $\,(\hat r,\hat u)\in\bbR\times\xe\nh$.
\begin{equation}\label{ace}
\begin{array}{rl}
\mathrm{(A)}&\lz\subseteq\xe\hs\mathrm{\ is\ a\ vector\ sub\-space\ of\
dimension\ }\,n-2\hh.\\
\mathrm{(B)}&\bc\hh T\,\mathrm{\ in\ (\ref{cte})\ leaves\ }\,\lz\,\mathrm{\
invariant.}\\
\mathrm{(C)}&\Sigma\,\mathrm{\ is\ a\ (full)\ lattice\ in\
}\,\bbR\times\lz\,\mathrm{\
and\ }\,\varPi(\Sigma)=\Sigma\hh.\\
\mathrm{(D)}&\varOmega(u,u')=0\,\mathrm{\ whenever\ }\,u,u'\in\lz\mathrm{,\ 
with\ }\,\hs\varOmega\,\mathrm{\ as\ in\ (\ref{oph}{\hyp}iii).}\\
\mathrm{(E)}&u\mapsto u(t)\,\mathrm{\ is\ an\ iso\-mor\-phism\
}\,\lz\to\mv\hs\mathrm{\ for\ every\ }\,t\in(0,\infty)\hh.
\end{array}
\end{equation}
Our choice of symbols has obvious reasons: $\,\hp\,$ is a Heis\-en\-berg
group, and $\,\lz\,$ a La\-grang\-i\-an sub\-space of $\,\xe\nh$.

The following remark and lemma use the hypotheses preceding (\ref{ace}).
\begin{remark}\label{vsadd}As an obvious consequence of (\ref{oph}-i),
whenever a vector sub\-space $\,\lz\subseteq\xe\hs$ satisfies
(\ref{ace}\hs-\hn D), 
$\,\bbR\times\lz\,$ is an Abel\-i\-an 
subgroup of $\,\hp=\bbR\times\xe\subseteq\mathrm{Iso}\hs(\hm\nh,\hg)$ and
the group operation in $\,\bbR\times\lz\,$ coincides with the vec\-tor-space
addition.
\end{remark}
\begin{lemma}\label{eqdif}Condition\/ {\rm(\ref{ace}\hs-\hn E)} for a vector
sub\-space\/ $\,\lz\subseteq\xe\hs$ implies that % the assignment
$\,(t,z,u)\mapsto(t,s,v)
=(t,z\,-\,\langle\dot u(t),u(t)\rangle,\,u(t))\,$ 
is %constitutes
an\/ $\,\hp$-equi\-var\-i\-ant dif\-feo\-mor\-phism\/ 
$\,(0,\infty)\times\bbR\times\lz\,\to\,\hm\nh$.
\end{lemma}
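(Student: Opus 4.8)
The plan is to study the assignment in two stages, writing $\,\Phi:(0,\infty)\times\bbR\times\lz\to\hm\,$ for the map $\,(t,z,u)\mapsto(t,z-\langle\dot u(t),u(t)\rangle,u(t))$: first that $\,\Phi\,$ is a diffeomorphism, then that it is equivariant. The sole hypothesis, condition (\ref{ace}\hs-\hn E), is exactly the statement that for each fixed $\,t\in(0,\infty)\,$ the evaluation $\,\mathrm{ev}_t:\lz\to\mv$, $\,u\mapsto u(t)$, is a linear isomorphism, and this is what drives both stages.

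For the diffeomorphism claim I would first record that every $\,u\in\xe\,$ is $\,C^\infty\,$ in $\,t$, by bootstrapping the linear equation (\ref{ode}-ii) whose coefficient $\,f\,$ is smooth; hence $\,(t,u)\mapsto u(t)\,$ and $\,(t,u)\mapsto\dot u(t)\,$ are smooth and $\,\bbR$-linear in $\,u$, so $\,\Phi\,$ is smooth. Bijectivity is then immediate from (\ref{ace}\hs-\hn E): as $\,\Phi\,$ fixes $\,t$, from a target $\,(t,s,v)\,$ one recovers $\,u=\mathrm{ev}_t^{-1}(v)\,$ from the last coordinate and $\,z=s+\langle\dot u(t),u(t)\rangle\,$ from the middle one. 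To obtain a diffeomorphism I would compute the differential: with the source ordered as $\,(t,u,z)\,$ and the target as $\,(t,v,s)$, it is block-triangular with diagonal blocks the identity, $\,\mathrm{ev}_t$, and the identity, so $\,\det d\Phi=\pm\det\mathrm{ev}_t\ne0\,$ by (\ref{ace}\hs-\hn E); combined with bijectivity this makes $\,\Phi\,$ a diffeomorphism.

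For equivariance I would let the subgroup $\,\bbR\times\lz\subseteq\hp\,$ act on the domain $\,(0,\infty)\times\bbR\times\lz\,$ by the left translations read off from the group law (\ref{oph}-i), namely $\,(r_0,u_0)\cdot(t,z,u)=(t,z+r_0+\varOmega(u,u_0),u+u_0)$, and on $\,\hm\,$ by the isometries $\,\gamma\,$ of (\ref{act}); this is the relevant part of the $\,\hp$-action, since only $\,\bbR\times\lz\,$ acts on the domain by such $\,t$-independent affine maps. The verification is to apply the $\,\gamma\,$ attached to $\,(r_0,u_0)\,$ to $\,\Phi(t,z,u)\,$ and match it with $\,\Phi\,$ of the translated point. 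The built-in correction $\,-\langle\dot u(t),u(t)\rangle\,$ in $\,\Phi$, together with the term $\,\langle\dot u_0(t),2u(t)+u_0(t)\rangle\,$ supplied by $\,\gamma$, is arranged so that all purely $\,t$-dependent pieces cancel except for $\,\langle\dot u(t),u_0(t)\rangle-\langle\dot u_0(t),u(t)\rangle$, which by symmetry of $\,\lr\,$ and the definition (\ref{oph}-iii) is the constant $\,\varOmega(u,u_0)$ — precisely the cocycle in the translation above.

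I expect the genuine heart of the matter to be this last cancellation: it succeeds only because the correction term in $\,\Phi\,$ is tuned to the $\,v$-linear part of the $\,s$-coordinate of $\,\gamma$, and because $\,\varOmega(u,u_0)\,$ is truly independent of $\,t\,$ (a fact recorded after (\ref{oph}-iii), coming from self-ad\-joint\-ness of $\,A\,$ and equation (\ref{ode}-ii)). The diffeomorphism half is comparatively routine; its only real subtlety is that (\ref{ace}\hs-\hn E) must be used at every $\,t$, so that $\,\mathrm{ev}_t$, and hence $\,\mathrm{ev}_t^{-1}$, varies smoothly and $\,\det d\Phi\,$ stays nonzero on the whole domain.
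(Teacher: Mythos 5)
Your proof is correct, but it takes a genuinely different route from the paper's for a simple reason: the paper offers no computation at all, its entire proof being the citation ``this is a special case of Remark 9.1'' of \cite{derdzinski-terek-ro}, so the smoothness, bijectivity, and equivariance you verify by hand are exactly what is outsourced to that reference. Your self-contained argument checks out: smooth dependence of solutions of (\ref{ode}-ii) on $\,t\,$ and (linearly) on $\,u\,$ makes $\,\Phi\,$ smooth; the Jacobian in the ordering $\,(t,u,z)\mapsto(t,v,s)\,$ is block-triangular with diagonal blocks $\,1$, $\,\mathrm{ev}_t$, $\,1$, so (\ref{ace}\hs-\hn E), together with the explicit global inverse $\,u=\mathrm{ev}_t^{-1}(v)$, $\,z=s+\langle\dot u(t),u(t)\rangle$, yields a diffeomorphism; and in the equivariance computation the $\,s$-coordinates of $\,\gamma(\Phi(t,z,u))\,$ and of $\,\Phi((r_0,u_0)\cdot(t,z,u))\,$ differ by exactly $\,\langle\dot u(t),u_0(t)\rangle-\langle\dot u_0(t),u(t)\rangle-\varOmega(u,u_0)=0$, the constancy of $\,\varOmega(u,u_0)\,$ resting, as you note, on (\ref{ode}-ii) and self-adjointness of $\,A$. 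One interpretive point is worth flagging: the lemma asserts $\,\hp$-equivariance, while you prove equivariance for the subgroup $\,\bbR\times\lz\,$ only; since a general $\,(r_0,u_0)\in\hp=\bbR\times\xe\,$ cannot act on $\,(0,\infty)\times\bbR\times\lz\,$ by any $\,t$-independent formula (the last coordinate would have to be $\,u+\mathrm{ev}_t^{-1}(u_0(t))$), your restriction is the only sensible reading from within this paper, and it is precisely what the application in the proof of Theorem~\ref{ccdro} requires --- there, with (\ref{ace}\hs-\hn D) in force, Remark~\ref{vsadd} kills your cocycle term $\,\varOmega(u,u_0)$, identifying the $\,\Sigma$-action on each $\,t$-level with the additive lattice action on $\,\bbR\times\lz$. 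What your route buys is a verifiable argument in place of an external pointer, and it isolates where each hypothesis enters ((\ref{ace}\hs-\hn E) at every $\,t$, the tuning of the correction term to (\ref{act})); what the paper's buys is brevity and the full generality of the cited remark.
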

\begin{proof}This is a special case of \cite[Remark 9.1]{derdzinski-terek-ro}.
\end{proof}
\begin{remark}\label{nllpd}As pointed out in 
\cite[the lines following formula (7.2)]{derdzinski-terek-tc},
the coordinate vector field $\,\partial/\partial s\,$ in (\ref{met}) is null
and parallel. Thus, $\,\partial/\partial s\,$ spans a one-di\-men\-sion\-al 
null parallel distribution $\,\mathcal{P}\nh$, contained, according to
\cite[Sect.\,1]{derdzinski-terek-cm}, in the Ol\-szak distribution 
$\,\dz\nh$, while $\,\nabla\nh dt=0\,$ since
$\,dt=2g(\partial/\partial s,\,\cdot\,)$. The mappings 
(\ref{act}) multiply $\hs t\hs$ and its gradient
$\,2\partial\nh/\nh\partial s\,$ by constants, and so $\hs\mathcal{P}\hn$
gives rise to distributions, also 
denoted by $\hs\mathcal{P}\nh$, on the compact quotients constructed in
Sections~\ref{fc} and~\ref{lh}.
\end{remark}
\begin{remark}\label{lhinc}A standard dilational model manifold (see
Section~\ref{sd}) is never geodesically complete. Namely,
$\,\nabla\nh dt=0\,$ in Remark~\ref{nllpd}. Thus, 
$\,t\,$ restricted to any geodesic is an af\-fine function of its parameter,
and so $\,t\,$ itself serves as 
such parameter for a geodesic $\,t\mapsto x(t)\,$ through any point $\,x\,$
with an initial velocity $\,v$ at $\,x\,$ having $\,d_v\w t=1$.
Our claim follows since $\,t\,$ ranges over $\,(0,\infty)$.
\end{remark}

\section{From $\,\bbZ$-spec\-tral systems to conditions
{\rm(\ref{ace})}}\label{fz}
\setcounter{equation}{0}
Suppose that $\,(m,k,E,\xs)\,$ is a $\,\bbZ$-spec\-tral system
(Section~\ref{zs}), $\,q\in(0,\infty)\smallsetminus\{1\}$ has
$\,q+q\nh^{-\nnh1}\nnh\in\bbZ$, while a
$\,C^\infty\nh$ function $\,f:(0,\infty)\to\hs\bbR\,$ satisfies both
(\ref{cnd}-b) and (\ref{dia}) with $\,\mu^\pm\nh=q\hs^{(-\nnh1\pm k)/2}\nh$.
We set $\,n=m+2$, choose a sem\-i-neu\-tral inner
product $\,\lr\,$ on an $\,m$-di\-men\-sion\-al real vector space $\,\mv\hs$
(see Remark~\ref{semnu}), a basis $\,e\nh_1\w,\dots,e\nh_m\w$ of 
$\,\mv\hs$ satisfying (\ref{eie}), and define $\,A,\bc:\mv\nh\to\mv\hs$ by
(\ref{aem}) for $\,a(1),\dots\hn,a(m)\,$ with
\begin{equation}\label{aje}
a(j)=E(2j-1)+(1-k)/2\mathrm{,\ that\ is,\ }\,a(j)=E(2j)+(1+k)/2\hh,
\end{equation}
the equivalence of both descriptions, and (\ref{aoe}), being due to (a) -- (c) 
in Section~\ref{zs}, which also easily imply that, for our 
$\,\mu^\pm\nh=q\hs^{(-\nnh1\pm k)/2}\nh$,
\begin{equation}\label{mpm}
(\mu\nh^+\nh q^{a(1)}\nh,\,\mu^-\nh q^{a(1)}\nh,\hs\dots,
\,\mu\nh^+\nh q^{a(m)}\nh,\,\mu^-\nh q^{a(m)})\,
=\,(q^{E(1)}\nh,\dots,q^{E(2m)})\hh.
\end{equation}
According to Remark~\ref{semnu}, these data $\,q,n,\mv\nh,\lr,A,\bc,f\,$ have 
all the properties %listed in the lines
preceding (\ref{cnd}). Thus, %and hence %so that
they lead to a %the corresponding
standard di\-la\-tion\-al model $\,(\hm\nh,\hg)\,$ with (\ref{met}).
\begin{lemma}\label{cnseq}The assumptions just listed have the following
consequences.
\begin{enumerate}
\item[(a)]Some ordered basis\/
$\,(u_1^+\nh,u_1^-\nh,\dots,u_m^+\nh,u_m^-)=(u_1\w,\dots,u_{2m}\w)\,$ of
$\,\xe\nnh$ consists of eigen\-vec\-tors of\/ $\,\bc\hh T:\xe\nh\to\xe\nh$,
cf.\ {\rm(\ref{cte})}, and the respective eigen\-values, equal to\/
$\,q^{E(1)}\nh,\dots,q^{E(2m)}\nh$, are pairwise distinct. 
%\item[(b)]
With\/ $\,\y^\pm$ as in\/ {\rm(\ref{dia})} and suitable functions\/ 
$\,z^\pm\nh:(0,\infty)\to\bbR$, %and our\/
%$\,\mu^\pm\nh=q\hs^{(-\nnh1\mp k)/2}\nh$,
this basis may be obtained by
setting\/ $\,u_i^\pm=\y^\pm\nh e\nh_i\w$ if $\,i<m\,$ and\/ 
$\,u_m^\pm=\y^\pm\nh e\nh_m\w+z^\pm\nh e\nh_1\w$.
\item[(b)]$\varOmega(u_i\w,u\nh_j\w)=0\,$ %if and only
whenever\/ $\,i,j\in\{1,\dots,2m\}\,$ and\/ $\,i+j\ne2m+1$, the basis\/
$\,(u_1\w,\dots,u_{2m}\w)\,$ and\/ 
$\,\varOmega\,$ being as in\/ {\rm(a)} and\/ {\rm(\ref{oph}-iii)}.
\end{enumerate}
\end{lemma}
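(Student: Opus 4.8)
The plan is to construct the eigenvectors of $\,\bc\hh T\,$ on $\,\xe\,$ explicitly, component by component, and then extract both assertions from the distinctness of the eigenvalues together with the scaling law (\ref{oph}-iv). First I would record the component description of $\,\xe$: writing $\,u=\sum_i u^{(i)}e_i\,$ with scalar $\,u^{(i)}$, equation (\ref{ode}-ii) combined with $\,Ae_m=e_1\,$ and $\,Ae_i=0\,$ for $\,i<m\,$ from (\ref{aem}) amounts to $\,u^{(i)}\in\wv\,$ for every $\,i\ge2\,$ together with the single coupling $\,\ddot u^{(1)}=fu^{(1)}+u^{(m)}\,$ of $\,u^{(1)}\,$ to $\,u^{(m)}$; moreover, since $\,\bc e_i=q^{a(i)}e_i$, the operator $\,\bc\hh T\,$ of (\ref{cte}) acts in components by $\,u^{(i)}\mapsto q^{a(i)}Tu^{(i)}$.

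Given this, the eigenvectors for $\,i<m\,$ are immediate: with $\,u_i^\pm=\y^\pm e_i\,$ (so $\,u^{(i)}=\y^\pm\in\wv\,$ and all other components vanishing) the invariance $\,T\y^\pm=\mu^\pm\y^\pm\,$ of (\ref{dia}) gives $\,(\bc\hh T)u_i^\pm=\mu^\pm q^{a(i)}u_i^\pm$, and (\ref{mpm}) reads off $\,\mu^+q^{a(i)}=q^{E(2i-1)}\,$ and $\,\mu^-q^{a(i)}=q^{E(2i)}$. The hard part will be the index $\,i=m$, i.e.\ producing $\,z^\pm\,$ making $\,u_m^\pm=\y^\pm e_m+z^\pm e_1\,$ an eigenvector in $\,\xe$. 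Matching the $\,e_m$-component forces the eigenvalue $\,q^{a(m)}\mu^\pm=q^{-1}\mu^\pm\,$ (here $\,a(m)=-1\,$ by (\ref{aoe})), whereupon membership in $\,\xe\,$ and the $\,e_1$-component equation reduce to the two simultaneous demands $\,\ddot z^\pm=fz^\pm+\y^\pm\,$ and $\,Tz^\pm=q^{-2}\mu^\pm z^\pm$.

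I would meet these by regarding the solution set $\,\mathcal{A}^\pm\,$ of $\,\ddot z=fz+\y^\pm\,$ as a two-dimensional affine space over $\,\wv$, and verifying---via $\,Tf=q^2f\,$ from (\ref{tro}) and $\,T\y^\pm=\mu^\pm\y^\pm$---that $\,S^\pm:=(q^2/\mu^\pm)\hh T\,$ maps $\,\mathcal{A}^\pm\,$ into itself. A fixed point of $\,S^\pm\,$ is exactly a $\,z^\pm\,$ fulfilling both demands, and an affine self-map of a two-dimensional space has a unique fixed point provided $\,1\,$ is not an eigenvalue of its linear part $\,(q^2/\mu^\pm)\hh T|_\wv$. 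Because $\,\mu^\pm=q^{(-\nnh1\pm k)/2}$, that linear part has eigenvalues $\,q^2\,$ and $\,q^{2\mp k}$, all of which differ from $\,1$; the sole delicate case $\,q^{2-k}=1\,$ is ruled out since $\,k\,$ is odd, whence $\,k\ne2$. This lone use of the parity of $\,k\,$ is, I expect, the heart of the proof. The eigenvalue of $\,u_m^\pm\,$ is then $\,q^{-1}\mu^\pm$, equal to $\,q^{E(2m-1)}\,$ and $\,q^{E(2m)}\,$ by (\ref{mpm}).

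Collecting the $\,2m\,$ vectors $\,u_\ell\,$ so obtained, their eigenvalues are $\,q^{E(1)},\dots,q^{E(2m)}$, pairwise distinct because $\,E\,$ is injective and $\,t\mapsto q^t\,$ is injective for $\,q\in(0,\infty)\smallsetminus\{1\}$; Remark~\ref{eigli} then makes them a basis of the $\,2m$-dimensional space $\,\xe$, which is (a). For (b) I would use that $\,u_\ell\,$ is a $\,\bc\hh T$-eigen\-vec\-tor with eigenvalue $\,q^{E(\ell)}\,$ and that $\,\varOmega\,$ is bilinear with $\,(\bc\hh T)^*\varOmega=q^{-1}\varOmega\,$ by (\ref{oph}-iv): applying both sides to $\,(u_i,u_j)\,$ yields $\,(q^{E(i)+E(j)}-q^{-1})\hh\varOmega(u_i,u_j)=0$, so $\,\varOmega(u_i,u_j)=0\,$ unless $\,E(i)+E(j)=-\nnh1$. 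Finally, condition (b) of the $\,\bbZ$-spec\-tral system gives $\,E(i)+E(2m+1-i)=-\nnh1$, and injectivity of $\,E\,$ upgrades this to the equivalence $\,E(i)+E(j)=-\nnh1\Leftrightarrow i+j=2m+1$; hence $\,\varOmega(u_i,u_j)=0\,$ whenever $\,i+j\ne2m+1$, establishing (b).
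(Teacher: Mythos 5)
Your proof is correct, and most of it runs parallel to the paper's: the eigenvectors $\,\y^\pm\nh e\nh_i\w$ for $\,i<m$, the dimension count plus Remark~\ref{eigli}, and the whole of part (b) -- where the paper likewise combines $\,(\bc\hh T)^*\nnh\varOmega=q\nh^{-\nnh1}\nh\varOmega\,$ from (\ref{oph}-iv) with injectivity of $\,E\,$ and condition (b) of the $\,\bbZ$-spec\-tral system -- are essentially identical. Where you genuinely diverge is the construction of $\,z^\pm\nh$. The paper picks an arbitrary particular solution $\,x^\pm$ of $\,\ddot x^\pm\nh=f\nh x^\pm\nh+\y^\pm\nh$, forms $\,u^\pm\nh=\y^\pm\nh e\nh_m\w+x^\pm\nh e\nh_1\w\in\xe\nh$, notes that $\,w^\pm\nh=[\hh\bc\hh T-q^{E(2m-(1\pm1)/2)}]\hh u^\pm$ lands in the plane $\,\mathcal{Z}\,$ spanned by $\,u_1\w,u_2\w$, and corrects $\,x^\pm$ by the preimage of $\,w^\pm$ under $\,\bc\hh T-q^{E(2m-(1\pm1)/2)}\nh$, which is invertible on $\,\mathcal{Z}\,$ simply because the $\,q^{E(j)}$ are pairwise distinct (injectivity of $\,E$). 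You instead obtain $\,z^\pm$ as the unique fixed point of the affine self-map $\,S^\pm\nh=(q^2\nnh/\mu^\pm)\hh T\,$ of the two-dimensional affine solution space of $\,\ddot z=f\nh z+\y^\pm\nh$, with the non-resonance conditions $\,q^2\ne1\,$ and $\,q^{2\mp k}\ne1$, the latter from oddness of $\,k$. The two devices are equivalent: dividing the paper's invertibility requirements $\,q^{E(2m-1)}\nnh\notin\{q^{E(1)}\nh,q^{E(2)}\}\,$ (and their mirror versions for $\,q^{E(2m)}$) by $\,q^{E(2m-1)}$ (resp.\ $\,q^{E(2m)}$), using $\,q^{E(1)}\nh=\mu\nh^+\nh q$, $\,q^{E(2)}\nh=\mu^-\nh q$, $\,q^{E(2m-1)}\nh=\mu\nh^+\nh q^{-1}\nh$, $\,q^{E(2m)}\nh=\mu^-\nh q^{-1}\nh$, yields exactly your conditions $\,q^2\nh,\hs q^{2\mp k}\ne1$. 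What your route buys is transparency: it isolates the precise resonance that must be excluded and makes explicit that oddness of $\,k\,$ (ruling out $\,k=2$) is what saves the construction at $\,i=m$, a fact the paper absorbs invisibly into the global distinctness of the spectrum; it also yields uniqueness of $\,z^\pm$ for free. What the paper's route buys is uniformity -- one invertibility statement, already needed elsewhere, handles both sign cases without computing any eigenvalue ratios.
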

\begin{proof}Due to (\ref{dia}) and (\ref{aem}), 
$\,u_i^\pm$ defined in (a) have 
$\,\bc\hh Tu_i^\pm=\mu^\pm\nh q^{a(i)}\nh u_i^\pm$ if $\,i<m$, that is, by
(\ref{mpm}), $\,\bc\hh Tu\nh_j\w=q^{E(j)}\nh u\nh_j\w$ whenever
$\,j\in\{1,\dots,2m-2\}$. That $\,q^{E(1)}\nh,\dots,q^{E(2m)}$ are distinct
follows as $\,E:\vr\to\bbZ\smallsetminus\{-\nnh1\}\,$ is injective
(Section~\ref{zs}).

Given functions $\,x^\pm\nh:(0,\infty)\to\bbR\,$ with 
$\,\ddot x^\pm\nh=f\nh x^\pm\nh+\y^\pm\nh$, (\ref{ode}-i) for 
$\y=\y^\pm$ and (\ref{aem}) yield (\ref{ode}-ii) for 
$\,u^\pm\nh=\y^\pm\nh e\nh_m\w+x^\pm\nh e\nh_1\w$, so that
$\,u^\pm\in\xe\hs$ and, again by (\ref{dia}), (\ref{aem}) and (\ref{mpm}),
$\,w^+\nh=[\hh\bc\hh T\nh-\hs q^{E(2m-1)}]u^+$ and 
$\,w^-\nh=[\hh\bc\hh T\nh-\hs q^{E(2m)}]u^-$ both lie in the subspace 
$\,\mathcal{Z}\,$ of $\,\xe\hs$ spanned by the eigen\-vec\-tors
$\,u_1\w,u_2\w$ for the eigen\-val\-ues $\,q^{E(1)}\nh,q^{E(2)}\nh$. (The
scalars $\,q^{E(2m-1)}\nh,q^{E(2m)}$ stand for the corresponding multiples of
identity.) Distinctness of the eigen\-val\-ues $\,q^{E(1)}\nh,\dots,q^{E(2m)}$
implies that $\,\bc\hh T\nh-\hs q^{E(2m-1)}\,$ and 
$\,\bc\hh T\nh-\hs q^{E(2m)}\,$ map $\,\mathcal{Z}\,$ isomorphically onto
itself. We may now choose $\,z^\pm$ to be the function such that 
$\,\bc\hh T\nh-\hs q^{E(2m-(1\pm1)/2)}\,$
sends $\,(x^\pm\nh-z^\pm)\hs e\nh_1\w$ onto $\,w^\pm\nh$, and
(a) follows, with linear independence of $\,u_1\w,\dots,u_{2m}\w$ due to
Remark~\ref{eigli}.

Next, by (\ref{oph}-iv) and (a), 
$\,q\nh^{-\nnh1}\nh\varOmega(u_i\w,u\nh_j)
=\varOmega(\bc\hh Tu_i\w,\bc\hh Tu\nh_j\w)
=q^{E(i)+E(j)}\nh\varOmega(u_i\w,u\nh_j\w)$
which, in view of injectivity of $\,E\hh$ and (b) in Section~\ref{zs}, yields
%the %`if' part of
(b).% The matrix of $\,\varOmega\,$ in our basis is thus 
%anti\-di\-ag\-o\-nal, while nondegenracy of $\,\varOmega$, cf.\
%(\ref{oph}-iii), implies the entries on the main anti\-di\-ag\-o\-nal are all
%nonzero, and the `only if' part follows.
\end{proof}
We also fix a pair $\,(\hat r,\hat u)\in\bbR\times\xe\nh$, as in the 
lines following (\ref{act}), and denote by $\,\varPi\,$ the resulting
linear operator $\,\bbR\times\xe\nh\to\bbR\times\xe\hs$ of
conjugation by $\hga$, in (\ref{oph}-ii).
\begin{lemma}\label{abcde}With the data\/ 
$\,q,n,\mv\nh,\lr,A,\bc,f\,$ and\/ $\,(\hat r,\hat u),\varPi\,$ chosen as 
above, conditions\/ {\rm(\ref{ace})} hold for suitable\/ 
$\,\lz\,$ and\/ $\,\Sigma$.
\end{lemma}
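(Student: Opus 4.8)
The plan is to set $\,\lz=\mathrm{span}\,\{u_i:i\in\xl\}$, the span of those members of the eigen\-basis of Lemma~\ref{cnseq}(a) whose index lies in the selector $\,\xl=\xs^{-\nnh1}(1)$, and to obtain $\,\Sigma\,$ from Remark~\ref{algeq}. Conditions (\ref{ace}\hs-\hn A) and (\ref{ace}\hs-\hn B) are then immediate: by (\ref{mel}) the set $\,\xl\,$ has $\,m=n-2\,$ elements, so the vectors $\,u_i\,$ with $\,i\in\xl$, being part of a basis of $\,\xe$, form a basis of $\,\lz$, which thus has dimension $\,n-2$; and $\,\lz$, spanned by eigen\-vec\-tors of $\,\bc\hh T$, is $\,\bc\hh T$-in\-var\-i\-ant.

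Conditions (\ref{ace}\hs-\hn D) and (\ref{ace}\hs-\hn E) each follow from one of the two selector properties of $\,\xl\,$ recorded in (\ref{sel}). For (D), the fact that $\,\xl\,$ meets every pair $\,\{i,i'\}\,$ with $\,i+i'\nh=2m+1\,$ in exactly one index means that no two $\,i,j\in\xl\,$ satisfy $\,i+j=2m+1$; since $\,\varOmega\,$ is skew-sym\-met\-ric and, by Lemma~\ref{cnseq}(b), $\,\varOmega(u_i\w,u\nh_j)=0\,$ whenever $\,i+j\ne2m+1$, bilinearity gives $\,\varOmega\equiv0\,$ on $\,\lz\times\lz$. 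For (E), the selector property for the pairs $\,\{2j-1,2j\}\,$ says that exactly one of the indices $\,2j-1,2j\,$ lies in $\,\xl$, i.e.\ exactly one of $\,u_j^+\nh=u_{2j-1}\,$ and $\,u_j^-\nh=u_{2j}\,$ is among the spanning vectors of $\,\lz$, for each $\,j$. Fix $\,t\in(0,\infty)$. Using the explicit form in Lemma~\ref{cnseq}(a), the chosen generator evaluates to a nonzero multiple of $\,e\nh_j\,$ when $\,j<m\,$ (as $\,\y^\pm\nnh(t)>0$) and to $\,\y^\pm\nnh(t)\hh e\nh_m\w+z^\pm\nnh(t)\hh e\nh_1\,$ when $\,j=m$; the resulting $\,m\,$ vectors form a triangular system with nonzero diagonal entries relative to $\,e\nh_1\w,\dots,e\nh_m$, hence a basis of $\,\mv$, so $\,u\mapsto u(t)\,$ is an iso\-mor\-phism $\,\lz\to\mv$.

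The remaining condition (\ref{ace}\hs-\hn C) is where the real work lies. By (B) and the scalar form of the first component in (\ref{oph}-ii), the operator $\,\varPi\,$ restricts to an en\-do\-mor\-phism of the $\,(m+1)$-di\-men\-sion\-al space $\,\bbR\times\lz$, block-tri\-an\-gu\-lar for the splitting $\,\bbR\oplus\lz\,$ with diagonal blocks $\,r\mapsto r/q\,$ on $\,\bbR\,$ and $\,\bc\hh T|_\lz\,$ on $\,\lz$. Its characteristic roots are therefore $\,q\nh^{-\nnh1}\,$ together with the eigen\-val\-ues $\,q^{E(i)}$, $\,i\in\xl$, i.e.\ exactly $\,\{q^a:a\in Y\}\,$ with $\,Y\nh=\{-\nnh1\}\cup E(\xl)\,$ as in (\ref{mel}). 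These $\,m+1\,$ numbers are real and pairwise distinct, because $\,E\,$ is injective and $\,-\nnh1\notin E(\vr)$, so $\,\varPi|_{\bbR\times\lz}\,$ is di\-ag\-o\-nal\-iz\-able; and by Remark~\ref{relev} they form the spectrum of a matrix in $\,\mathrm{GL}\hh(m+1,\bbZ)$. Remark~\ref{algeq}, applied to $\,\varPi|_{\bbR\times\lz}$, then delivers a full lattice $\,\Sigma\subseteq\bbR\times\lz\,$ with $\,\varPi(\Sigma)=\Sigma$, completing (C). The point requiring care is precisely this last reduction: one must observe that the off-diagonal coupling $\,u\mapsto2\hh\varOmega(\bc\hh Tu,\hat u)\,$ is irrelevant to the characteristic polynomial, so that the spectrum of $\,\varPi|_{\bbR\times\lz}\,$ coincides with the $\,\mathrm{GL}\hh(m+1,\bbZ)$-spectrum $\,\{q^a:a\in Y\}\,$ supplied by the $\,\bbZ$-spec\-tral system through Remark~\ref{relev}, after which Remark~\ref{algeq} applies verbatim.
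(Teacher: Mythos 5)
Your proof is correct and follows essentially the same route as the paper: the same choice of $\,\lz=\mathrm{span}\,\{u_i\w:i\in\xl\}$, the same use of the two selector properties in (\ref{sel}) for (\ref{ace}\hs-\hn D) and (\ref{ace}\hs-\hn E) via Lemma~\ref{cnseq}, and the same appeal to Remarks~\ref{relev} and~\ref{algeq} for (\ref{ace}\hs-C). Your only departure is expository: you spell out the block-tri\-an\-gu\-lar structure of $\,\varPi\,$ on $\,\bbR\times\lz\,$ showing the off-di\-ag\-o\-nal coupling does not affect the characteristic roots $\,\{q^a:a\in Y\}$ -- a step the paper compresses into ``immediate'' -- and your account of it is accurate.
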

\begin{proof}Let $\,\lz\,$ be the span of $\,\{u_i\w:i\in\xl\}\,$ for the
basis $\,(u_1^+\nh,u_1^-\nh,\dots,u_m^+\nh,u_m^-)=(u_1\w,\dots,u_{2m}\w)\,$ of 
$\,\xe\hs$ appearing in Lemma~\ref{cnseq}(a) and the set 
$\,\xl\,$ associated with our $\,\bbZ$-spec\-tral system $\,(m,k,E,\xs)\,$
(see Section~\ref{zs}), that is, $\,\xl=\{i\in\vr:\xs(i)=1\}$. 
Now (\ref{ace}\hs-\nh A) -- (\ref{ace}\hs-\hn B) follow since $\,m=n-2$. As 
$\,\xl\,$ is a selector for the second family of (\ref{sel}), the basis
$\,\{u_i\w:i\in\xl\}\,$ of $\,\lz\,$ has the form
\begin{equation}\label{uoe}
(u_1^{\ve(1)},\dots,u_m^{\ve(m)})\,\mathrm{\ with\ some\ signs\
}\,\ve(1),\dots,\ve(m)\hh.
\end{equation}
For each fixed $\,t\in(0,\infty)$, the operator
$\,\xe\ni u\mapsto u(t)\in\mv\hs$
sends $\,u_i^\pm$ to $\,\y^\pm\nh(t)\hs e\nh_i\w$ if $\,i<m\,$ and 
$\,u_m^\pm$ to $\,\y^\pm\nh(t)\hs e\nh_m\w+z^\pm\nh(t)\hs e\nh_1\w$, so that,
restricted to $\,\lz$, it is represented in the bases (\ref{uoe}) and
$\,e\nh_1\w,\dots,e\nh_m\w$ by an upper triangular matrix with all diagonal
entries positive in view of (\ref{dia}), which proves (\ref{ace}\hs-\hn E).
Simultaneously, $\,\xl\,$ is a selector for the first family in (\ref{sel}),
so that $\,i+j\ne2m+1\,$ if $\,u_i\w,u\nh_j\w\in\lz$. Combined with
Lemma~\ref{cnseq}(b), this yields (\ref{ace}\hs-\hn D). Finally, the existence
of a lattice $\,\Sigma\,$ required in (\ref{ace}\hs-C) is immediate from
Remarks~\ref{relev} and~\ref{algeq}.
\end{proof}
An example of a $\,C^\infty\nh$ function $\,f:(0,\infty)\to\hs\bbR\,$
having both (\ref{cnd}-b) and (\ref{dia}) for 
$\,\mu^\pm\nh=q\hs^{(-\nnh1\pm k)/2}\nh$, as required at the beginning of this
section, is provided by
\begin{equation}\label{exa}
f(t)\,=\,\frac{k^2\nnh-1}{4t^2}\mathrm{,\ \ with\
}\,\y^\pm\nh(t)\,=\,t\hs^{(1\mp k)/2}\mathrm{\ in\ (\ref{dia}).}
\end{equation}
For the resulting standard di\-la\-tion\-al model $\,(\hm\nh,\hg)$, 
%with (\ref{met}),
cf.\ the lines following (\ref{mpm}),
\begin{equation}\label{lho}
(\hm\nh,\hg)\,\mathrm{\ is\ locally\ homogeneous.}
\end{equation}
Namely, by (\ref{exa}), 
the expression (\ref{met}) for $\,\g\,$ amounts to that for the metric 
$\,g\hn^P$ in \cite[top of p.\ 170]{derdzinski-78}, our coordinate
$\,t\,$ being denoted there by $\,u\hn^1\nh$. Our Remark~\ref{semnu} now
clearly implies formula (10) in \cite[p.\ 172]{derdzinski-78} which, as stated
there, guarantees homogeneity of the metric $\,g^P$ on 
$\,(0,\infty)\times\bbR\times\mv\nh$, with 
$\,\mv\nh=\bbR^{\hskip-.6ptn-2}\nnh$.

\section{From conditions {\rm(\ref{ace})} to compact quotients}\label{fc}
\setcounter{equation}{0}
We now show that conditions (\ref{ace}) are sufficient for a standard
di\-la\-tion\-al model to admit compact isometric quotients. Specifically, let
$\,(m,k,E,\xs),q,f$, along with $\,n=m+2\,$ and
$\,\mv\nh,\lr,e\nh_1\w,\dots,e\nh_m\w,A,\bc$,  
have the properties listed at the beginning
of Section\/~{\rm\ref{fz}}, so that the data $\,q,n,\mv\nh,\lr,A,\bc,f\,$ give
rise to a standard di\-la\-tion\-al model $\,(\hm\nh,\hg)\,$ with (\ref{met}).
We denote by $\,\mathcal{P}\hs$ the one-di\-men\-sion\-al null parallel
distribution on $\,(\hm\nh,\hg)$, defined in Remark~\ref{nllpd}.
\begin{theorem}\label{ccdro}Under these assumptions, we also fix a 
pair\/ $\,(\hat r,\hat u)\in\bbR\times\xe\nh$, cf.\ the lines following\/ 
{\rm(\ref{act})}, and define\/ $\,\varPi\,$ by\/ {\rm(\ref{oph}-ii)}. If\/
$\,\lz\,$ and\/ $\,\Sigma\,$ are any objects satisfying\/ {\rm(\ref{ace})},
then
\begin{equation}\label{sgg}
\mathrm{the\ group\ }\,\Gm\nh\subseteq\mathrm{Iso}\hs(\hm\nh,\hg)\mathrm{,\
generated\ by\ }\,\hga\,\mathrm{\ appearing\ in\ (\ref{act})\ and\
}\,\Sigma\hh,
\end{equation}
acts on\/ 
$\,\hm$ freely and properly dis\-con\-tin\-u\-ous\-ly with a compact quotient
manifold\/ $\,M\nh=\hm\nnh/\hh\Gm\nh$. 
In addition, $\,M\,$ is the total space of a torus bundle 
over the circle, with the leaves of\/ $\,\mathcal{P}^\perp\hskip-2pt$ serving
as the fibres, and its fundamental group\/ $\,\Gm\,$ has no Abel\-i\-an 
subgroup of finite index, so that\/ $\,M\,$ cannot be dif\-feo\-mor\-phic to a
torus, or even covered by a torus.
\end{theorem}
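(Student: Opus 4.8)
The plan is to verify the three assertions of Theorem~\ref{ccdro} in turn: freeness and proper discontinuity of the action, compactness of the quotient, and the torus-bundle structure together with the statement about $\,\Gm\nh$.

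First I would analyze the group-theoretic structure of $\,\Gm\nh$. By (\ref{ace}\hs-\hn D) and Remark~\ref{vsadd}, the lattice $\,\Sigma\subseteq\bbR\times\lz\,$ is a free Abel\-i\-an group of rank $\,n-1\,$ under vector addition (since $\,\dim(\bbR\times\lz)=n-1\,$ by (\ref{ace}\hs-\hn A)). Conjugation by $\,\hga\,$ is the operator $\,\varPi\,$ of (\ref{oph}-ii), and (\ref{ace}\hs-\hn C) says $\,\varPi(\Sigma)=\Sigma$, so $\,\Sigma\,$ is normal in $\,\Gm\nh$ with cyclic quotient generated by the image of $\,\hga$. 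Thus $\,\Gm\,$ is a semidirect product $\,\Sigma\rtimes_\varPi\bbZ$, provided $\,\hga\,$ has infinite order modulo $\,\Sigma$; this last point follows because $\,\hga\,$ shifts the first coordinate $\,t\,$ by the factor $\,q\ne1\,$ (see (\ref{act})), whereas every element of $\,\Sigma\,$ fixes $\,t$, so no nonzero power of $\,\hga\,$ can lie in $\,\Sigma$.

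Next I would establish the dynamical statements. Using the $\,\hp$-equi\-var\-i\-ant dif\-feo\-mor\-phism of Lemma~\ref{eqdif}, which is available by (\ref{ace}\hs-\hn E), I transfer the action to the product coordinates $\,(t,z,u)\in(0,\infty)\times\bbR\times\lz$; there $\,\Sigma\,$ acts by translations in the $\,(z,u)$-directions and $\,\hga\,$ acts by the affine map that scales $\,t\mapsto qt\,$ and acts on $\,(z,u)\,$ through $\,\varPi\,$ and the fixed data $\,(\hat r,\hat u)$. Freeness is then immediate: a nontrivial element either moves $\,t\,$ (the powers of $\,\hga\,$) or, being a nontrivial lattice translation, moves $\,(z,u)$. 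For proper discontinuity I would argue that the $\,\Sigma$-quotient of each slice $\,\{t\}\times\bbR\times\lz\,$ is the compact torus $\,(\bbR\times\lz)/\Sigma$, that the $\,t$-coordinate descends to a function $\,\hm/\Sigma\to(0,\infty)\,$ realizing $\,\hm/\Sigma\,$ as a trivial torus bundle over $\,(0,\infty)$, and that $\,\hga\,$ induces on this bundle a map covering the multiplication-by-$q$ diffeomorphism of $\,(0,\infty)$, which is properly discontinuous with quotient circle $\,(0,\infty)/\langle t\sim qt\rangle$. Compactness of $\,M=\hm/\Gm\,$ and its structure as a torus bundle over the circle both drop out of this fibered picture, with the fibres being the leaves of $\,\mathcal{P}^\perp\hskip-2pt$ (the level sets of $\,t$, since $\,\mathcal{P}^\perp\hskip-2pt$ is the kernel of $\,dt$, cf.\ Remark~\ref{nllpd}).

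The main obstacle I anticipate is the final algebraic claim that $\,\Gm\,$ has no Abel\-i\-an subgroup of finite index. Here I would use the semidirect-product description $\,\Gm=\Sigma\rtimes_\varPi\bbZ\,$ and examine the action of $\,\varPi\,$ on $\,\Sigma\otimes\bbR=\bbR\times\lz$. The point is that $\,\varPi\,$ is \emph{not} quasi-unipotent: its linear part on $\,\lz\,$ is $\,\bc\hh T$, whose eigen\-val\-ues on $\,\lz\,$ are among $\,q^{E(i)}\,$ for $\,i\in\xl\,$ (from Lemma~\ref{cnseq}(a)), and since $\,q\ne1\,$ these include values of modulus $\,\ne1$. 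Consequently some power $\,\varPi^N\,$ acts on $\,\Sigma\,$ with an eigen\-value off the unit circle, so no finite-index subgroup of $\,\Gm\,$ can centralize a finite-index subgroup of $\,\Sigma$; any finite-index subgroup $\,\Gm'\subseteq\Gm\,$ still surjects onto a nontrivial subgroup of $\,\bbZ$, and conjugation by a generator of that image acts on $\,\Gm'\cap\Sigma\,$ (a finite-index, hence full-rank, sublattice) by a hyperbolic automorphism, which cannot be the identity -- contradicting commutativity. Since no finite-index subgroup is Abel\-i\-an, $\,\Gm\,$ cannot be virtually $\,\bbZ^{n}\nh$, so $\,M\,$ is neither a torus nor finitely covered by one. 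I would phrase this last step carefully, as it is the only place where the specific spectral data (that $\,q\ne1\,$ forces a non-unit eigen\-value) is essential.
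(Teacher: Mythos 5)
Your proposal is correct, and while its group-theoretic skeleton matches the paper's -- the normal form $\,\hga\hs^r\nh\gamma\,$ with $\,\gamma\in\Sigma\,$ coming from $\,\varPi(\Sigma)=\Sigma$, the homomorphism onto $\,\bbZ$, freeness via the scaling $\,t\mapsto q\hh^rt$, and the final contradiction obtained from a full-rank sublattice $\,\Gm'\nh\cap\Sigma\,$ on which conjugation would have to act trivially -- you reach the topological conclusions by a genuinely different route. The paper proves proper discontinuity directly through a convergent-sequences criterion (citing Lee), establishes compactness by exhibiting a compact set, the image of $\,J\times K'\nh$, meeting every $\,\Gm$-orbit, and then derives the torus-bundle structure from the submersion $\,(t,s,v)\mapsto(\log t)/(\log q)\,$ via Ehresmann's fibration theorem; you instead quotient in two stages, noting that $\,\hm\nnh/\hh\Sigma\,$ is a trivial $(n-1)$-torus bundle over $\,(0,\infty)\,$ and that $\,\hga\,$ descends (it normalizes $\,\Sigma$) to a map covering $\,t\mapsto qt$, so that $\,M\,$ is the mapping torus of the induced torus diffeomorphism. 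This buys compactness and the bundle structure simultaneously and dispenses with Ehresmann, at the modest cost of invoking the standard fact that proper discontinuity passes through the tower $\,\Sigma\trianglelefteq\Gm\,$ (i.e., follows from proper discontinuity of $\,\Sigma\,$ on $\,\hm\,$ and of $\,\Gm/\Sigma\cong\bbZ\,$ on $\,\hm\nnh/\hh\Sigma$), which you gesture at but should state explicitly. In the last clause your eigenvalue witness also differs: you use that $\,\bc\hh T\,$ restricted to $\,\lz\,$ has an eigenvalue $\,q^{E(i)}$ of modulus $\,\ne1$, whereas the paper simply observes that the conjugation operator of (\ref{oph}-ii) scales the $\,\bbR$-factor of $\,\bbR\times\lz\,$ by a nontrivial power of $\,q\nh^{-\nnh1}\nh$, so it cannot be the identity. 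One small caveat here: the theorem allows \emph{any} $\,\lz\,$ satisfying (\ref{ace}), not just the eigenvector span of Lemma~\ref{abcde}, so the eigenvalues of $\,\bc\hh T\,$ on $\,\lz\,$ are only known to lie among $\,q^{E(i)}$ with $\,i\,$ ranging over all of $\,\vr\nh$, not just $\,\xl$; your conclusion survives because $\,E\,$ is injective into $\,\bbZ\smallsetminus\{-\nnh1\}$, so at most one such eigenvalue equals $\,1\,$ while $\,\dim\lz=m\ge3$, but the paper's $\,q$-component argument sidesteps any spectral input about $\,\lz\,$ altogether.
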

\begin{proof}By Lemma~\ref{eqdif}, Remark~\ref{vsadd} and (\ref{ace}\hs-C),
the action of $\,\Sigma\subseteq\hp\,$ on each $\,t$-level
$\,\{t\}\times\bbR\times\mv\hs$ is, equi\-var\-i\-ant\-ly,
\begin{enumerate}
  \def\theenumi{{\rm\alph{enumi}}}
\item [(i)] identified with the additive action of the lattice $\,\Sigma\,$ on
$\,\bbR\times\lz$.
\end{enumerate}
Since $\,\varPi\hs$ acts on $\,\Sigma\,$ via conjugation by
$\,\hga$, cf.\ the lines following (\ref{oph}), 
\begin{enumerate}
  \def\theenumi{{\rm\alph{enumi}}}
\item [(ii)] $\Sigma\,$ is an Abel\-i\-an normal sub\-group of $\,\Gm$,
\end{enumerate}
where we again used Lemma~\ref{eqdif}, Remark~\ref{vsadd} and
(\ref{ace}\hs-C). Thus, 
any element of $\,\Gm\nh$, being a finite product of factors from the
set $\,\Sigma\cup\{\hga,\hga\hs^{-\nnh1}\}$, equals 
$\,\hga\hs^r\nh\gamma\,$ (written multiplicatively) with some $\,r\in\bbZ\,$
and $\,\gamma\in\Sigma$. From (\ref{act}), if 
$\,(t,s,v)\in\hm\nh$,
\begin{enumerate}
  \def\theenumi{{\rm\alph{enumi}}}
\item[(iii)] $(\hga\hs^r\nh\gamma)(t,s,v)=(q\hh^rt,s'\nh,v')\,$ for some  
$\,s'\nh,v'\nh$, which also leads to
\item[(iv)] the homo\-mor\-phism 
$\,\Gm\ni\hga\hs^r\nh\gamma\mapsto r\in\bbZ$,
\end{enumerate}
and so $\,\Gm\hs$ acts on $\,\hm\,$ freely: if
$\,\hga\hs^r\nh\gamma\,$ has a fixed point $\,(t,s,v)$, (iii) gives 
$\,q\hh^r\nh t=t$. Therefore, $\,r=0$, and $\,\gamma$, having a fixed
point, must equal the identity since, by (i), the action of 
$\,\Sigma\,$ on $\,\hm\,$ is free. 

Consider now sequences with the terms 
$\,(r,\gamma)\in\bbZ\times\Sigma\,$ and $\,x=(t,s,v)\in\hm$ such that
$\,x\,$ and $\,\hga^r\nh(\gamma(x))\,$ both converge. Thus, (iii) implies 
convergence of the sequence $\,r\,$ (and hence its ultimate constancy). For 
the sequences $\,\gamma'=\hga\hs^r\nh\gamma\hh\hga\hh^{-\nh r}$ in 
$\,\Sigma\,$ and $\,x'\nh=\hga^r\nh(x)\in\hm\nh$, with this 
``ultimate constant'' $\,r$, writing $\,\gamma'\nh=(r,u)\,$ 
and $\,x'\nh=(t'\nh,s'\nh,v')$,
we obtain convergence of both $\,\gamma'\nh(x')=\hga\hs^r\nh(\gamma(x))\,$
and $\,x'\nh$, so that (i) implies eventual constancy of $\,\gamma'$ and --
consequently -- that of $\,\hga\hs^r\nh\gamma\in\Gm\nh$.

The implication established in the last paragraph proves
proper dis\-con\-ti\-nu\-i\-ty of
the action of $\,\Gm\hs$ on $\,\hm\nh$. See
\cite[Exercise 12\hs-\nh19 on p.\ 337]{lee}.

Next, $\,\hm\,$ has a compact subset $\,K\,$ 
intersecting every orbit of $\,\Gm\nh$, which yields compactness of the
quotient manifold $\,M=\hm\nnh/\hh\Gm\nh$. In fact,
we may choose $\,K\,$ to be the image, under the
$\,\hp$-equi\-var\-i\-ant dif\-feo\-mor\-phism in (a), of
$\,J\times K'\nh$, where $\,J\subseteq(0,\infty)\,$ is 
the closed interval with the endpoints $\,1,q$, and $\,K'$ a 
compact set in $\,\bbR\times\lz\,$ which intersects all orbits of the lattice  
$\,\Sigma\,$ acting on $\,\bbR\times\lz\,$ by vec\-tor-space translations. We
now modify any $\,(t,s,v)\in\hm\,$ by applying to it elements of 
$\,\Gm$ twice in a row so as to end up with a point of $\,K\nh$. First, 
$\,\hga\hs^r(t,s,v)=(q\hh^rt,s'\nh,v')$, cf.\ (iii), 
has $\,q\hh^rt\in J\,$ for a suitable $\,r\in\bbZ\,$ (as the sum of 
$\,\log t\,$ and some multiple of $\,\log q\,$ lies between $\,\log q\,$ and 
$\,0$). We may thus assume that $\,t\in J$. With this fixed 
$\,t$, (i) allows us to choose $\,\gamma'\in\Sigma\,$ sending $\,(t,s,v)\,$ 
into $\,K$.

The surjective sub\-mer\-sion 
$\,\hm\ni(t,s,v)\mapsto(\log t)/(\log q)\in\bbR$, being clearly
equi\-var\-i\-ant relative to the homo\-mor\-phism (iv)  
along with the obvious actions of $\,\Gm$ on $\,\hm\nh$, via (iii), and
$\,\bbZ\,$ on
$\,\bbR$, descends to a surjective sub\-mer\-sion $\,M\to S^1$ which,
due to the compact case of Ehres\-mann's 
fibration theorem \cite[Corollary 8.5.13]{dundas}, is a bundle projection.
This leads, via (i), to the required conclusion about 
a torus bundle over the circle. The claim about the fibres follows: the leaves
of $\,\mathcal{P}^\perp\nnh$ in $\,\hm$ are the levels of $t$, since,
according to Remark~\ref{nllpd}, $\,\mathcal{P}\hs$ is 
spanned by the parallel gradient of $\,t$.

Finally, a fi\-nite-in\-dex sub\-group $\,\Gm'$ of $\,\Gm\,$ would have a
nontrivial image under the homo\-mor\-phism (iv) (the kernel of which,
$\,\Sigma$, has an infinite index in $\,\Gm\nh$, and
hence cannot contain $\,\Gm'$), and $\,\Gm'\nh\cap\Sigma\,$ would clearly be
a fi\-nite-in\-dex sub\-group of the lattice $\,\Sigma\,$ spanning,
consequently, the whole space $\,\bbR\times\lz$. The conjugation by any
$\,\gamma'\nh\in\Gm'\nh\smallsetminus\Sigma\,$ would thus lead to the
operator (\ref{oph}-ii) equal to the identity on $\,\bbR\times\lz$, and yet
having the $\,q$-component different from $\,1$. This contradiction proves the
final clause of the theorem.
\end{proof}

\section{The lo\-cal\-ly-ho\-mo\-ge\-ne\-ous case}\label{lh}
\setcounter{equation}{0}
Constructing compact rank-one ECS manifolds of dimension $\,n\,$ via
Theorem~\ref{ccdro} is clearly reduced to finding two objects: a
$\,\bbZ$-spec\-tral system $\,(m,k,E,\xs)$, for $\,m=n-2$, and a
$\,C^\infty\nh$ function $\,f:(0,\infty)\to\hs\bbR\,$ with (\ref{cnd}-b) and
(\ref{dia}), for $\,q\in(0,\infty)\smallsetminus\{1\}\,$ such that 
$\,q+q\nh^{-\nnh1}\nnh\in\bbZ$, and $\,\mu^\pm\nh=q\hs^{(-\nnh1\pm k)/2}\nh$.
One now gets the former from Theorem~\ref{zspsy}, as long as $\,n\ge5\,$ is
odd, while an example of the latter is then provided by formula (\ref{exa}). 

The resulting existence theorem may be phrased as follows.
\begin{theorem}\label{lchom}Let\/ $\,n\ge5\,$ be odd. Applying
Theorem\/~{\rm\ref{ccdro}} to data that include\/ $\,(m,k,E,\xs)\,$ of 
Theorem\/~{\rm\ref{zspsy}}, where\/ $\,m=n-2$, and\/ $\,f\,$ given by\/
{\rm(\ref{exa})}, we obtain the group\/ 
$\,\Gm\hs$ in\/ {\rm(\ref{sgg})} acting on\/ 
$\,\hm$ freely and properly dis\-con\-tin\-u\-ous\-ly with a locally
homogeneous and geodesically incomplete compact quotient rank-two ECS 
manifold\/ $\,M\nh=\hm\nnh/\hh\Gm\hs$ of dimension $\,n$, forming the total
space of a nontrivial torus bundle over the circle, with the fibres provided
by the leaves of\/ $\,\mathcal{P}^\perp\hskip-2pt$, while its fundamental
group\/ $\,\Gm\,$ has no fi\-nite-in\-dex Abel\-i\-an sub\-group.
\end{theorem}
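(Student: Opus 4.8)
The plan is to obtain Theorem~\ref{lchom} as the concluding assembly of the machinery of Sections~\ref{zs}--\ref{fc}, the only genuinely new points being that local homogeneity and geodesic incompleteness, so far recorded only for the model $\,(\hm\nh,\hg)$, \emph{descend} to the compact quotient $\,M$. First I would fix the input data. Since $\,n\ge5\,$ is odd, $\,m=n-2\ge3\,$ is odd, so Theorem~\ref{zspsy} furnishes a $\,\bbZ$-spec\-tral system $\,(m,k,E,\xs)\,$ with $\,k=m+2$. I pick any $\,q\in(0,\infty)\smallsetminus\{1\}\,$ with $\,q+q\nh^{-\nnh1}\nnh\in\bbZ\,$ (for instance $\,q=(3+\sqrt5)/2$, where $\,q+q\nh^{-\nnh1}\nnh=3$), and let $\,f\,$ be the function in (\ref{exa}); as $\,k=m+2\ge5$, the numerator $\,k^2\nnh-1\,$ is nonzero, so $\,f\,$ is a nonconstant $\,C^\infty\nh$ function, and the lines following (\ref{exa}) show that it satisfies (\ref{cnd}-b) together with (\ref{dia}) for $\,\mu^\pm\nh=q\hs^{(-\nnh1\pm k)/2}\nh$. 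With the remaining data $\,\mv\nh,\lr,A,\bc\,$ and the pair $\,(\hat r,\hat u)\,$ set up as at the start of Section~\ref{fz}, these are exactly the hypotheses of Lemma~\ref{abcde}, which then produces a subspace $\,\lz\,$ and a lattice $\,\Sigma\,$ satisfying all of (\ref{ace}). Theorem~\ref{ccdro} thereupon applies verbatim and delivers the group $\,\Gm\,$ of (\ref{sgg}) acting on $\,\hm\,$ freely and properly dis\-con\-tin\-u\-ous\-ly, the compact quotient $\,M=\hm\nnh/\hh\Gm\,$ of dimension $\,n$, its realization as a torus bundle over the circle whose fibres are the leaves of $\,\mathcal{P}^\perp\hskip-2pt$, and the absence in $\,\Gm\,$ of any finite-index Abel\-i\-an subgroup; the last fact also forces the bundle to be nontrivial, since a trivial torus bundle over $\,S^1\,$ would make $\,\Gm=\pi_1M\,$ Abel\-i\-an (indeed free Abel\-i\-an), contradicting the absence of a finite-index Abel\-i\-an subgroup. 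That $\,M\,$ is an ECS manifold of rank two is inherited from the model: by Section~\ref{sd} the standard di\-la\-tion\-al model $\,(\hm\nh,\hg)\,$ is ECS of rank two precisely because $\,\mathrm{rank}\hs A=1$, visible from (\ref{aem}), and both the parallel Weyl tensor and the one-di\-men\-sion\-al Ol\-szak distribution are local invariants preserved by the covering local isometry $\,\hm\to M$.

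Next I would handle local homogeneity. By (\ref{lho}), the choice of $\,f\,$ via (\ref{exa}) makes $\,(\hm\nh,\hg)\,$ locally homogeneous. Since $\,\Gm\,$ acts by isometries, freely and properly dis\-con\-tin\-u\-ous\-ly, the projection $\,\hm\to M\,$ is a covering map and a local isometry. Local homogeneity is invariant under local isometries: given $\,p,p'\in M$, I lift them to $\,\tilde p,\tilde p'\in\hm$, use local homogeneity of $\,\hm\,$ to get an isometry between neighborhoods of $\,\tilde p\,$ and $\,\tilde p'\,$ carrying one to the other, and compose with the (locally invertible) projection at both ends to obtain the desired local isometry of $\,M\,$ near $\,p\,$ and $\,p'$. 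Hence $\,M\,$ is locally homogeneous.

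Finally, geodesic incompleteness. Remark~\ref{lhinc} shows $\,(\hm\nh,\hg)\,$ to be geodesically incomplete, the witness being a geodesic for which $\,t\,$ itself is an af\-fine parameter ranging over $\,(0,\infty)$. Because $\,\hm\to M\,$ is a \psr\ covering, geodesics lift and project with unchanged af\-fine parameter, so a maximal geodesic of $\,M\,$ and any lift of it share the same maximal af\-fine-parameter interval; the projection of the above geodesic of $\,\hm$ therefore has maximal domain $\,(0,\infty)\ne\bbR$. Equivalently, completeness of $\,M\,$ would force completeness of $\,\hm$, which fails. Thus $\,M\,$ is geodesically incomplete, matching Remark~\ref{lhinc} ``for rather obvious reasons.''

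At the level of this theorem the main obstacle is essentially nil, the substantive work having been front-loaded into Theorem~\ref{zspsy} (the combinatorial existence of $\,\bbZ$-spec\-tral systems) and Lemma~\ref{abcde} (the verification of (\ref{ace})). What remains to watch is only the passage from the model to its quotient: confirming that the chosen $\,f\,$ really meets every hypothesis feeding Theorem~\ref{ccdro}, and that the two global adjectives -- locally homogeneous and geodesically incomplete -- genuinely descend along the covering $\,\hm\to M$. Both of these reduce to the single observation that this projection is a covering map and a local isometry.
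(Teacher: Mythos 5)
Your proposal is correct and follows the paper's own route exactly: the paper proves Theorem~\ref{lchom} by the same assembly -- Theorem~\ref{zspsy} for $(m,k,E,\xs)$, formula (\ref{exa}) for $f$ (feeding Lemma~\ref{abcde} and Theorem~\ref{ccdro}), then (\ref{lho}) for local homogeneity and Remark~\ref{lhinc} for incompleteness. The only difference is that you spell out the routine descent of local homogeneity and geodesic incompleteness along the covering $\hm\to M$ and the nontriviality of the torus bundle, steps the paper leaves implicit, and your treatment of them is sound.
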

In fact, for local homogeneity and incompleteness, see (\ref{lho}), and 
Remark~\ref{lhinc}.% below.\end{proof}

\setcounter{section}{1}
\renewcommand{\thesection}{\Alph{section}}
\renewcommand{\theequation}{\Alph{section}.\arabic{equation}}
\setcounter{theorem}{0}
\renewcommand{\thetheorem}{\thesection.\arabic{theorem}}
\section*{Appendix A: Special spectra realized in function spaces}\label{ss}
\setcounter{equation}{0}
We fix $\,q\in(0,\infty)\smallsetminus\{1\}$. For a continuous function
$\,f:(0,\infty)\to\bbR\,$ satisfying condition (\ref{cnd}-b),
recall from Section~\ref{sd} that 
the two-di\-men\-sion\-al space $\,\mathcal{W}\hs$ %consisiting of all
of $\,C^2$ solutions 
$\,\y:(0,\infty)\to\bbR\,$ to the sec\-ond-or\-der ordinary differential
equation (\ref{ode}-i) is obviously
invariant under the {\it translation operator\/} $\,T\,$ given by 
\begin{equation}\label{tra}
[T\nnh \y](t)\,=\,\y(t/q)\hh,\hskip7pt\mathrm{\ with\ }\,\det\hs T
=q\nh^{-\nnh1}\mathrm{\ in\ }\,\mathcal{W},
\end{equation}
where $\,\det\hs T=q\nh^{-\nnh1}$ as in the line following (\ref{dia}).
Clearly, (\ref{cnd}-b) amounts to 
periodicity, with the period $\,\log q$, of the 
function $\,\bbR\ni\vp\mapsto e^{2\evp}\nnh f(e^\evp)$. Therefore,
\begin{equation}\label{inf}
\begin{array}{l}
\mathrm{both\ the\ vector\ space\ }\,\hs\mathcal{F}\,\mathrm{\ of\ continuous\
functions\ }\,f\hs\mathrm{\ satisfying\ (\ref{cnd}{\hyp}b)}\\ 
\mathrm{and\,\ its\,\ subspace\,\ 
}\,\hh\mathcal{F}\nnh_0\w\hs\hs=\,\hs\{f\in\mathcal{F}:f(1)
=0\}\hh\,\mathrm{\,\ are\,\ in\-fi\-nite}\hyp\mathrm{di\-men\-sion\-al.}
\end{array}
\end{equation}
%Consider the case where $\,T\,$ associated with $\,f\,$ has, for some
We will need such $\,f\,$ with $\,T\,$ having, for some
$\,c\in(0,\infty)$, the spectrum
\begin{equation}\label{spe}
q\hh^{\pm c-1/2}\nh\mathrm{,\ that\ is,\ positive\
real\ eigen\-values\ and\ }\,\mathrm{tr}\hskip2.7ptT
=2q\nh^{-\nnh1/2}\nnh\cosh(c\log q)\hh.
\end{equation}
Examples of real-an\-a\-lyt\-ic functions $\,f\in\mathcal{F}\,$ with
(\ref{spe}) are provided by
\begin{equation}\label{fct}
f\hskip-1.9pt_c\w(t)\,=\,(c\hh^2\nh-1/4)/t^2\nh,\hskip8pt\mathrm{where\ 
}\,c\in(0,\infty)\hh.
\end{equation}
In fact, an obvious basis of $\,\mathcal{W}\hs$ for
$\,f\nh=f\hskip-1.9pt_c\w\,$ consists of $\,\y=\y_{\hn c}^\pm$ given by
\begin{equation}\label{bas}
\y_{\hn c}^\pm(t)=t^{\mp c+1/2}\mathrm{,\ so\ that\
}\,T\nnh \y_{\hn c}^\pm\nh=q^{\pm\hh c-1/2}\nh \y_{\hn c}^\pm.
\end{equation}
\begin{theorem}\label{infdm}For any fixed\/
$\,q\in(0,\infty)\smallsetminus\{1\}\,$ and\/ $\,c\in(0,\infty)\,$ there
exists an in\-fi\-nite-di\-men\-sion\-al manifold of smooth functions\/
$\,f:(0,\infty)\to\bbR\,$ with\/ {\rm(\ref{cnd}-b)} such that the
corresponding translation operator\/ $\,T:\mathcal{W}\to\hn\mathcal{W}\hs$
has the eigen\-values\/ $\,q\hh^{\pm c+1/2}\nh$, and some basis of\/
$\,\mathcal{W}\hs$ di\-ag\-o\-nal\-iz\-ing $\,T\,$ consists of positive
functions. The same remains true if one replaces `smooth\hn\nnh' by
real-an\-a\-lyt\-ic. 

More precisely, for any\/ $\,f\hskip-2.3pt_*\w\in\mathcal{F}\nnh_0\w$ -- see\/
{\rm(\ref{inf})} -- sufficiently\/ $\,C^0\nh$-close to\/ $\,0$, there exists a
unique\/ $\,a\,$ close to\/ $\,c\,$ in\/ $\,\bbR\,$ such that\/ 
$\,f=f\hskip-2.3pt_*\w+f\hskip-1.9pt_a\w$ realizes the\/
$\,T\hskip-2.6pt$-spec\-trum\/ $\,\{q^{c-1/2}\nh,q\nh^{-c-1/2}\nh\}$, while the
resulting assigment\/ $\,f\hskip-2.3pt_*\w\mapsto f\,$ is smooth and injective.
%This includes the
%function\/ $\,f\hskip-1.3pt_{3/\hn2}\w$, with\/ $\,f\hskip-2.3pt_*\w=0$.
\end{theorem}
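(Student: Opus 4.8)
The plan is to reduce the requirement of a prescribed pair of eigenvalues to a single scalar equation and solve it by the implicit function theorem, using $\,f_c\,$ from (\ref{fct}) as the unperturbed solution. First I would observe that, by (\ref{tra}), $\,\det T=q^{-1}\,$ in $\,\wv\,$ for every $\,f\in\fv$, so the two eigen\-val\-ues of $\,T\,$ are the roots of $\,\gy^2-(\mathrm{tr}\hskip2.7ptT)\hskip1pt\gy+q^{-1}=0\,$ and hence are determined by $\,\mathrm{tr}\hskip2.7ptT\,$ alone. The prescribed spectrum $\,\{q^{c-1/2},q^{-c-1/2}\}\,$ has product $\,q^{-1}\,$ automatically, and is realized precisely when $\,\mathrm{tr}\hskip2.7ptT=\tau$, where $\,\tau=q^{c-1/2}+q^{-c-1/2}=2q^{-1/2}\cosh(c\log q)$. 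Since $\,c>0\,$ and $\,q\ne1$, we have $\,\tau>2q^{-1/2}$, so these roots are automatically real, positive and distinct, cf.\ (\ref{spe}).

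Next I would express $\,\mathrm{tr}\hskip2.7ptT\,$ explicitly as a functional of $\,f$. Let $\,\y_1,\y_2\,$ solve (\ref{ode}-i) with $\,\y_1(1)=\dot\y_2(1)=1\,$ and $\,\dot\y_1(1)=\y_2(1)=0$; then $\,[T\y](t)=\y(t/q)\,$ gives the matrix of $\,T\,$ in this basis, whence $\,\mathrm{tr}\hskip2.7ptT=\y_1(1/q)+q^{-1}\dot\y_2(1/q)$, an expression involving only the restriction $\,f|_J\,$ to the compact interval $\,J\,$ with endpoints $\,1\,$ and $\,1/q$. The fundamental matrix of the first-order system equivalent to (\ref{ode}-i) depends real-an\-a\-lyt\-i\-cal\-ly on $\,f|_J\in C^0(J)\,$ through its Peano (time-ordered exponential) series, so $\,\mathrm{tr}\hskip2.7ptT\,$ is a real-an\-a\-lyt\-ic functional of $\,f\,$ once we topologize $\,\fv_0\,$ by the supremum norm over a fundamental domain of the scaling $\,t\mapsto qt$ -- any such domain, say $\,J\,$ itself, recovers $\,f\,$ via (\ref{cnd}-b) -- relative to which $\,f_*\mapsto f_*|_J\,$ is bounded linear. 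Making this analytic dependence precise, together with choosing a workable topology on the infinite-di\-men\-sion\-al space $\,\fv_0$, is the step I expect to demand the most care.

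With $\,f_a\,$ as in (\ref{fct}) -- note $\,f_a\in\fv\,$ and $\,f_*+f_a\in\fv$ -- I would apply the implicit function theorem to $\,\Phi(f_*,a)=\mathrm{tr}\hskip2.7ptT(f_*+f_a)$, a smooth, indeed real-an\-a\-lyt\-ic, map $\,\fv_0\times\bbR\to\bbR$. By (\ref{spe})--(\ref{bas}) we have $\,\Phi(0,a)=2q^{-1/2}\cosh(a\log q)$, so $\,\Phi(0,c)=\tau\,$ and $\,\partial_a\Phi(0,c)=2q^{-1/2}(\log q)\sinh(c\log q)\ne0$. The theorem then supplies a unique smooth $\,f_*\mapsto a(f_*)\,$ near $\,c$, defined for every $\,C^0$-small $\,f_*\in\fv_0$, with $\,\Phi(f_*,a(f_*))=\tau$; hence $\,f=f_*+f_{a(f_*)}\,$ realizes the prescribed spectrum and $\,f_*\mapsto f\,$ is smooth -- precisely the theorem's last sentence. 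Injectivity is immediate: two parameters giving the same $\,f\,$ would force $\,f_*-f_*'=(a(f_*')^2-a(f_*)^2)/t^2$, a multiple of $\,1/t^2\,$ lying in $\,\fv_0$, hence $\,0$. (The constraint $\,f(1)=0\,$ defining $\,\fv_0\,$ is exactly what makes the $\,1/t^2$-direction carried by $\,f_a\,$ transverse to the perturbations.)

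It remains to secure positivity and the claimed regularity. As the two eigen\-val\-ues are simple, the $\,T$-eigen\-func\-tions normalized by $\,\y^\pm(1)=1\,$ depend continuously on $\,f_*\,$ and, at $\,f_*=0$, equal $\,\y_a^\pm(t)=t^{\mp a+1/2}>0\,$ from (\ref{bas}); so for small $\,f_*\,$ they stay positive on the compact $\,J$, and the relation $\,\y^\pm(qt)=(\mu^\pm)^{-1}\y^\pm(t)\,$ with $\,\mu^\pm>0\,$ propagates positivity to all of $\,(0,\infty)$. Finally, since $\,f_a\,$ is real-an\-a\-lyt\-ic, the output $\,f=f_*+f_{a(f_*)}\,$ inherits the regularity of $\,f_*$; rerunning the same implicit-function argument with $\,f_*\,$ confined to the infinite-di\-men\-sion\-al Banach space of $\,(\log q)$-pe\-ri\-od\-ic functions that extend hol\-o\-mor\-phi\-cal\-ly to a fixed complex strip about $\,\bbR\,$ and vanish at $\,1$ -- a space on which $\,\Phi\,$ remains real-an\-a\-lyt\-ic -- yields an infinite-di\-men\-sion\-al manifold whose points are real-an\-a\-lyt\-ic, hence smooth, functions, which establishes both forms of the assertion.
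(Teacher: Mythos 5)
Your proposal is correct and takes essentially the same route as the paper's proof: the paper likewise exploits $\det T=q^{-1}$ to reduce the prescribed spectrum to the single trace equation $H(f_*,a)=\mathrm{tr}\,T=2\hskip1ptq^{-1/2}\cosh(c\log q)$, computes the trace from the solutions normalized at $t=1$ exactly as in your formula $\mathrm{tr}\,T=\y_1(1/q)+q^{-1}\dot\y_2(1/q)$, applies the implicit function theorem at $(f_*,a)=(0,c)$, proves injectivity by evaluating at $t=1$ (where $f(1)=a^2-1/4$ determines $a$), and secures positivity of the diagonalizing basis via $C^0$-closeness on the compact interval with endpoints $1,q$ followed by propagation through the eigenvector relation. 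Your additional care about the topology on $\mathcal{F}_0$, the Peano-series analyticity of the trace functional, and the holomorphic-strip Banach space for the real-analytic variant merely makes explicit details the paper leaves tacit.
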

\begin{proof}Define a mapping
$\,H:\mathcal{F}\nnh_0\w\times(0,\infty)\to\bbR\,$ by
$\,H(f\hskip-2.3pt_*\w,a)=\mathrm{tr}\hskip2.7ptT\,$ for $\,T$ arising from 
$\,f=f\hskip-2.3pt_*\w+f\hskip-1.9pt_a\w$. Smoothness of $\,H\,$ follows since
\begin{equation}\label{hfa}
H(f\hskip-2.3pt_*\w,a)=\y\nh^+(1/q)+q\nh^{-\nnh1}\hn\dot \y^-(1/q)\hh.
\end{equation}
where $\,\y\nh^+\nnh,\y^-$ are solutions to (\ref{ode}) with the initial
conditions $\,(\y\nh^+(1),\dot \y\nh^+(1))=(1,0)$ and 
$\,(\y^-(1),\dot \y^-(1))=(0,1)$. To verify (\ref{hfa}) note that any
$\,\y\in\mathcal{W}\hs$ equals
$\,\y(1)\hs\y\nh^++\dot\y(1)\hs \y^-\nh$. For $\,T\nnh\y\,$ rather than 
$\,\y\,$ this reads, by (\ref{tra}), 
$\,T\nnh\y=\y(1/q)\hs\y\nh^++q\nh^{-\nnh1}\hn\dot\y(1/q)\hs \y^-$ which,
applied to $\,\y=\y\nh^+$ and $\,\y=\y^-\nh$, gives 
\[
(T\nnh \y\nh^+\nnh,T\nnh \y^-)=(\y\nh^+(1/q)\hs \y\nh^+
+q\nh^{-\nnh1}\hn\dot \y\nh^+(1/q)\hs \y^-\nh,\,\,
\y^-(1/q)\hs \y\nh^++q\nh^{-\nnh1}\hn\dot \y^-(1/q)\hs \y^-)\hh,
\]
showing that the matrix of $\,T\,$ in the basis $\,\y\nh^+\nnh,\y^-$ of
$\,\mathcal{W}\hs$ has the trace claimed in (\ref{hfa}). 
Also, as each $\,f\hskip-1.9pt_c\w$ leads to the spectrum (\ref{spe}), 
$\,H(0,a)=2\hh q\nh^{-\nnh1/2}\nnh\cosh(a\log q)$ for all $\,a>0$, including
$\,a=c$. Since 
$\,d\hs[H(0,a)]/\hn da\ne0\,$ at $\,a=c$, the implicit mapping
theorem \cite[p.\ 18]{lang} provides neighborhoods of
$\,0\hs$ in $\,\mathcal{F}\,$ and $\,c\,$ in $\,\bbR\,$ with the required
smooth mapping $\,f\hskip-2.3pt_*\w\mapsto a\,$ sending $\,0\,$ to $\,c\,$ 
and having $\,H(f\hskip-2.3pt_*\w,a)=2\hh q\nh^{-\nnh1/2}\nnh\cosh(c\log q)$.
Injectivity of 
$\,f\hskip-2.3pt_*\w\mapsto f\hskip-2.3pt_*\w+f\hskip-1.9pt_a\w$ 
follows: $\,f(1)=f\hskip-1.9pt_a\w(1)=a\hh^2\nh-1/4$ uniquely determines
$\,a>0$, and hence $\,f\hskip-1.9pt_a\w$ and $\,f\hskip-2.3pt_*\w$ as well.

Finally, positivity of the functions (\ref{bas}) on the closed interval with
the endpoints $\,1,q\,$ yields the same for functions $\,C^0\nh$-close to them 
that di\-ag\-o\-nal\-ize $\,T\,$ for $\,f\,$ close to $\,f\hskip-1.9pt_c\w$.
Being eigen\-vectors of the translation operator $\,T\nh$, they thus remain
positive throughout $\,(0,\infty)$.
\end{proof}

\setcounter{section}{2}
\renewcommand{\thesection}{\Alph{section}}
\setcounter{theorem}{0}
\renewcommand{\thetheorem}{\thesection.\arabic{theorem}}
\section*{Appendix B: Rank-two ECS manifolds of di\-la\-tion\-al
type}\label{ro}
\setcounter{equation}{0}
The distribution $\,\mathcal{P}\hs$ (see Remark~\ref{nllpd}) on every compact
rank-two ECS manifold $\,(M\nh,\g)\,$ arising in Theorem~\ref{ccdro} is a real
line bundle over $\,M\,$ with a linear connection induced by the
Le\-vi-Ci\-vi\-ta connection of $\,\g$. Due to its obvious flatness, 
the latter connection has a 
countable holonomy group contained in $\,\bbR\smallsetminus\{0\}$.

All our examples $\,(M\nh,\g)\,$ are {\it di\-la\-tion\-al\/} in the sense
that this holonomy group is infinite, which follows since the group $\,\Gm\hs$
in (\ref{sgg}) contains the element $\,\hga\,$ defined by (\ref{act}) with
$\,q\in(0,\infty)\smallsetminus\{1\}$.

Theorem~\ref{lchom} now obviously remains valid if one replaces `given by 
(\ref{exa})' with {\it arising in Theorem\/~{\rm\ref{infdm}} for\/} $\,c=k/2$,
and `locally homogeneous' with {\it di\-la\-tion\-al\/}:
\begin{theorem}\label{dilat}Let\/ $\,n\ge5\,$ be odd. Applying
Theorem\/~{\rm\ref{ccdro}} to\/ $\,(m,k,E,\xs)\,$ of 
Theorem\/~{\rm\ref{zspsy}}, where\/ $\,m=n-2$, and\/ $\,f\,$ arising in
Theorem\/~{\rm\ref{infdm}} for\/ $\,c=k/2$, we obtain the group\/ 
$\,\Gm\hs$ in\/ {\rm(\ref{sgg})} acting on\/ 
$\,\hm$ freely and properly dis\-con\-tin\-u\-ous\-ly with a di\-la\-tion\-al 
and geodesically incomplete compact quotient rank-two ECS 
manifold\/ $\,M\nh=\hm\nnh/\hh\Gm\hs$ of dimension $\,n$, forming the total
space of a nontrivial torus bundle over the circle, the fibres of which are 
the leaves of\/ $\,\mathcal{P}^\perp\hskip-2pt$, and the fundamental 
group\/ $\,\Gm$ of\/ $\,M\,$ 
has no fi\-nite-in\-dex Abel\-i\-an sub\-group.
\end{theorem}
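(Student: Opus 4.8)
The plan is to observe that Theorem~\ref{dilat} is not a new result requiring fresh machinery, but rather a corollary assembled from pieces already in place: Theorem~\ref{ccdro} supplies the entire geometric and group-theoretic conclusion (free, properly discontinuous action; compact quotient; torus-bundle-over-circle structure; no finite-index Abelian subgroup), provided its hypotheses are met, and my task is simply to verify that the specific data fed into it---the $\bbZ$-spectral system from Theorem~\ref{zspsy} and the function $f$ from Theorem~\ref{infdm}---do satisfy those hypotheses, and then to upgrade the generic conclusion with the two extra adjectives \emph{dilational} and \emph{geodesically incomplete}. First I would check that the input function $f$ is legitimate: Theorem~\ref{infdm}, applied with $c=k/2$, produces a smooth $f$ satisfying (\ref{cnd}-b) for which the translation operator $T:\wv\to\wv$ has eigenvalues $q^{\pm c+1/2}$ with a positive diagonalizing basis, which is exactly condition (\ref{dia}) with $\mu^\pm=q^{(-1\pm k)/2}$ once one matches $c=k/2$ (the sign conventions agreeing since $\mu^+\mu^-=q^{-1}$). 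Thus $f$ meets precisely the requirements listed at the start of Section~\ref{fz}.

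Next I would invoke the chain of lemmas in Sections~\ref{fz} and~\ref{fc}. Since $(m,k,E,\xs)$ is a genuine $\bbZ$-spectral system with $m=n-2$ odd (guaranteed for odd $n\ge5$ by Theorem~\ref{zspsy}), and since $f$ satisfies (\ref{cnd}-b) and (\ref{dia}) as just verified, Lemma~\ref{abcde} guarantees the existence of $\lz$ and $\Sigma$ fulfilling all of conditions (\ref{ace}). With these in hand, Theorem~\ref{ccdro} applies verbatim and delivers the group $\Gm$ of (\ref{sgg}), the free and properly discontinuous action, the compact quotient $M=\hm/\Gm$, the nontrivial torus bundle over $S^1$ with fibres the leaves of $\mathcal{P}^\perp$, and the absence of any finite-index Abelian subgroup in $\Gm$. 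That $M$ is a rank-two ECS manifold of dimension $n$ follows from the remark after (\ref{cnd}) together with $\mathrm{rank}\,A=1$ (see (\ref{aem})), exactly as for the locally homogeneous case.

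It remains to supply the two distinguishing properties. Geodesic incompleteness is immediate from Remark~\ref{lhinc}: every standard dilational model is geodesically incomplete because $\nabla\,dt=0$ forces $t$ to be an affine parameter along geodesics while $t$ ranges only over $(0,\infty)$, and this descends to the quotient. For the \emph{dilational} property I would appeal to the opening discussion of Appendix~B: the line bundle $\mathcal{P}$ carries a flat connection whose holonomy group is contained in $\bbR\smallsetminus\{0\}$, and this holonomy is infinite precisely because $\Gm$ contains the element $\hga$ of (\ref{act}), which rescales $t$ (and hence the parallel section spanning $\mathcal{P}$) by the factor $q\ne1$. Since iterating $\hga$ yields the unbounded set $\{q^r:r\in\bbZ\}$ of holonomy factors, the holonomy group is infinite, which is the definition of dilational.

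The proof is therefore almost entirely a bookkeeping assembly, and the only place demanding genuine care---the single step I would flag as the substantive one---is the matching of parameters in the first paragraph: confirming that the output of Theorem~\ref{infdm} at $c=k/2$ reproduces \emph{exactly} the spectral normalization $\mu^\pm=q^{(-1\pm k)/2}$ demanded at the start of Section~\ref{fz}. The potential pitfall is a mismatch between the eigenvalue labelling $q^{\pm c+1/2}$ in Theorem~\ref{infdm} and the convention $q^{(-1\pm k)/2}$ in (\ref{dia}); one must check that $c=k/2$ indeed makes $q^{c-1/2}=q^{(k-1)/2}=\mu^+$ and $q^{-c-1/2}=q^{(-1-k)/2}=\mu^-$ line up as required, rather than introducing a stray factor. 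Everything downstream (Lemma~\ref{abcde}, Theorem~\ref{ccdro}) is insensitive to which admissible $f$ is used, so once this parameter identification is confirmed, the theorem follows with no further analytic input.
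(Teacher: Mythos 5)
Your proposal is correct and follows essentially the same route as the paper, which likewise obtains Theorem~\ref{dilat} as the assembly of Theorem~\ref{ccdro} (via Theorem~\ref{zspsy} and Lemma~\ref{abcde}) with $f$ from Theorem~\ref{infdm} at $c=k/2$, citing Remark~\ref{lhinc} for incompleteness and the opening paragraphs of Appendix~B (the element $\hga$ with scale factor $q\ne1$ making the holonomy of the flat line bundle $\mathcal{P}$ infinite) for the dilational property. Your parameter check $q^{c-1/2}=q^{(k-1)/2}=\mu^+$, $q^{-c-1/2}=q^{(-1-k)/2}=\mu^-$ -- using the precise spectrum $\{q^{c-1/2},q^{-c-1/2}\}$ from the second clause of Theorem~\ref{infdm} rather than the first clause's labelling, together with the positivity of the diagonalizing basis required by (\ref{dia}) -- is exactly the matching the paper's construction relies on.
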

Geodesic incompleteness is immediate here from Remark~\ref{lhinc}.
Also, most of the examples resulting from Theorem~\ref{dilat} have the
di\-la\-tion\-al property without local homogeneity, which is 
guaranteed by the in\-fi\-nite-di\-men\-sion\-al freedom of choosing $\,f\,$
in Theorem~\ref{infdm}: in the lo\-cal\-ly-ho\-mo\-ge\-ne\-ous case 
$\,|f(t)|^{-\nh1/2}$ must be -- according to
\cite[formula (3.3)]{derdzinski-terek-ne} 
-- an af\-fine function of $\,t$. This restricts it to a 
fi\-nite-di\-men\-sion\-al moduli space.

%\newpage

\end{document}